 \newtheorem{thm}{Theorem}[section]
 \newtheorem{cor}[thm]{Corollary}
 \newtheorem{lemma}[thm]{Lemma}
 \newtheorem{prop}[thm]{Proposition}
 \theoremstyle{definition}
 \newtheorem{defn}[thm]{Definition}
 \newtheorem{exmp}[thm]{Example}
 \theoremstyle{remark}
 \newtheorem{rem}[thm]{Remark}
 \newtheorem{quest}[thm]{Question}
 \numberwithin{equation}{subsection}
 \newtheorem{ack}{Acknowledgment}
\newcommand{\TT}{\text{$\mathcal{T}$}}
\newcommand{\BB}{\text{$\mathcal{B}$}}
\newcommand{\UU}{\text{$\mathcal{U}$}}
\newcommand{\FF}{\text{$\mathcal{F}$}}
\newcommand{\GG}{\text{$\mathcal{G}$}}
\newcommand{\LB}{\text{$\Lambda$}}
\newcommand{\sg}{\text{$\sigma$}}
\newcommand{\intr}{\operatorname{int}}
\newcommand{\id}{\operatorname{id}}
\newcommand{\Ker}{\operatorname{Ker}}
\newcommand{\Image}{\operatorname{Im}}
\newcommand{\Star}{\operatorname{Star}}
\newcommand{\MT}{\operatorname{MT}}
\newcommand{\Meas}{\operatorname{Meas}}
\newcommand{\length}{\operatorname{length}}
        \newcommand{\field}[1]{\text{$\mathbb{#1}$}}
        \newcommand{\N}{\field{N}}
        \newcommand{\Z}{\field{Z}}
        \newcommand{\Q}{\field{Q}}
        \newcommand{\R}{\field{R}}
        \newcommand{\C}{\field{C}}
\begin{document}

\title{Cohomology of measurable laminations}

\author{Carlos Meni\~no Cot\'on}

\address{Departamento de Xeometr\'{\i}a e Topolox\'{\i}a\\
         Facultade de Matem\'aticas\\
         Universidade de Santiago de Compostela\\
         15782 Santiago de Compostela}

\email{carlos.meninho@gmail.com}

\thanks{Supported by MICINN (Spain): FPU program and Grant MTM2008-02640}


\begin{abstract}
A new notion of cohomology is introduced for MT-spaces, which are
measurable and topological spaces whose measurable
structure may not agree with the Borel \sg-algebra of their
topology. The main examples of MT-spaces are measurable foliations. This is a singular version of the measurable simplicial
cohomology defined by Heitsch and Lazarov for foliations \cite{Heitsch-Lazarov} and extended by Bermudez for MT-spaces \cite{Bermudez}. Basic
topics of algebraic topology are adapted, and applications to
the theory of foliations are given. Moreover we introduce a new
notion of singular $L^2$-cohomology for MT-spaces.
\end{abstract}

\maketitle

\section*{Introduction}
The theory of MT-spaces is mainly devoted to the study of foliations. A usual foliation induces an MT-space by choosing the Borel \sg-algebra of the ambient space of the foliation and the leaf topology (whose connected components are the leaves). The resulting MT-space may be thought of as a simplification of the foliation where the transverse topology is dropped off and only the leaf topology and the global measurable structure remain. Thus the transverse topological dynamics does not make sense. However the transverse measurable dynamics is now relevant, and MT-spaces are the appropriate abstract setting to study it. For instance, a notion of holonomy transformation can be defined, but the holonomy germs cannot be considered. In general, transversely measurable invariants of foliations (or invariants of manifolds) can be defined in the MT-setting, where they may become easier to deal with.

The first section introduces the basic theory and notation of MT-spaces, used in the rest of the paper, following the work of M.~Berm\'udez and G.~Hector \cite{Bermudez-Hector}.

The second section introduces the concept of measurable singular cohomology. Its definition, possible variants, and corresponding versions of basic results of algebraic topology are given, like excision, Mayer-Vietoris or its homotopy invariance.

The third section deals with other versions of measurable cohomology, like the simplicial or $L^2$ versions, which can be found in the works of M.~Bermudez \cite{Bermudez} and J.L.~Heitsch and C.~Lazarov~\cite{Heitsch-Lazarov}. We prove that our singular version is isomorphic to the simplicial one.

The fourth section is devoted to a new concept of measurable singular $L^2$-cohomology. We prove that it is a homotopy invariant under some conditions, and that it is isomorphic to the $L^2$-simplicial cohomology defined in \cite{Heitsch-Lazarov}. We think that this new cohomology could be used to give a new proof of the homotopy invariance of the $L^2$-Betti numbers proved by J.L.~Heitsch and C.~Lazarov \cite{Heitsch-Lazarov}.

 A final section of examples is given. For instance, the measurable cohomology of degree $n$ is computed for the Kr\"onecker type foliation of the $n+1$ torus by minimal hyperplanes of dimension $n$, showing that it is not trivial.

\section{MT-spaces and measurable laminations}

A {\em measurable topological space\/}, or {\em MT-space\/}, is
a set $X$ equipped with a \sg-algebra and a topology. Usually, measure
theoretic concepts will refer to the \sg-algebra of $X$, and
topological concepts will refer to its topology; in
general, the \sg-algebra is different from the Borel \sg-algebra
induced by the topology. An {\em MT-map\/} between MT-spaces is a
measurable continuous map. An {\em MT-isomorphism} is a map between MT-spaces that is a measurable isomorphism and a homeomorphism.

Trivial examples are topological spaces with de Borel
$\sg$-algebra, and measurable spaces with the discrete topology. Let
$X$ and $Y$ be MT-spaces. Suppose that there exists a measurable
embedding $i:X\to Y$ that maps measurable sets to measurable sets. Then $X$ is called an {\em MT-subspace\/} of $Y$. Notice that if $X$
and $Y$ are standard, the measurability of $i$ means that it maps Borel sets to Borel sets
\cite{Srivastava}. The product $X\times Y$ is an MT-space too with the product topology and the $\sg$-algebra generated by
products of measurable sets of $X$ and $Y$.

Let $R$ be an equivalence relation on an MT-space $X$. In order to
give an MT-structure to the quotient $X/R$, consider the quotient topology and the $\sg$-algebra
generated by the projections of measurable saturated sets of $X$.

A {\em Polish space\/} is a completely metrizable and separable
topological space. A {\em standard Borel space\/} is a measurable
space isomorphic to a Borel subset of a Polish space. Let $T$ be a
standard Borel space and let $P$ be a Polish space. $P\times T$
will be endowed with the structure of MT-space defined by the
\sg-algebra generated by products of Borel subsets of $T$ and
Borel subsets of $P$, and the product of the discrete topology on
$T$ and the topology of $P$.

A {\em measurable chart} on an MT-space $X$ is an
MT-isomorphism $\varphi:U\to P\times T$, where $U$ is open and
measurable in $X$, $T$ is a standard Borel space, and $P$ is locally compact, connected and
locally path connected Polish space; let us remark that $P$ and $T$ depend on the chart. The sets $\varphi^{-1}(P\times\{\ast\})$ are
called {\em plaques\/} of $\varphi$, and the sets
$\varphi^{-1}(\{\ast\}\times T)$ are called {\em transversals\/}
associated to $\varphi$. A {\em measurable atlas\/} on $X$
is a countable family of measurable charts whose domains
cover $X$. A {\em measurable lamination\/} is an MT-space
that admits a measurable atlas. Observe that we
always consider countable atlases, therefore the ambient space is
also a standard space. The connected components of $X$ are called
its {\em leaves\/}. An example of measurable lamination is
a usual foliation with its Borel \sg-algebra and the leaf
topology. According to this definition, the leaves are second
countable connected manifolds, but they may not be Hausdorff. If $P\simeq \R^m$, it
is possible to define a concept of $C^r$ tangential structure; in this setting, it cannot be defined as a maximal atlas with (tangentially) $C^r$ changes of coordinates because the atlases are required to be countable, but we proceed as follows. A measurable atlas is said to be (tangentially) $C^r$ if its coordinate changes are (tangentially) $C^r$. Then a $C^r$ structure is an equivalence class of $C^r$ measurable atlases, where two $C^r$ measurable atlases are equivalent if their union is a $C^r$ measurable atlas.

The term ``lamination'' (or ``measurable lamination'') is commonly used when the leaves are manifolds. Thus the term ``measurable Polish lamination'' could be more appropiate in our setting. But we simply write ``measurable lamination'' for the sake of simplicity.

A measurable subset $T\subset X$  is called a {\em transversal\/}
if its intersection with each leaf is countable \cite{Heitsch-Lazarov}; these are
slightly more general than the transversals of \cite{Bermudez}. Let
$\TT(X)$ be the family of transversals of $X$. This set is closed
under countable unions and intersections, but it is not a
\sg-algebra. A transversal meeting all leaves is called
{\em complete\/}.

A {\em measurable holonomy transformation\/} is a measurable
isomorphism $\gamma:T\to T'$, for $T, T' \in \TT(X)$, which maps
each point to a point in the same leaf. A {\em transverse
invariant measure\/} on $X$ is a \sg-additive map, $\LB:\TT(X)\to
[0,\infty]$, invariant by measurable holonomy transformations. The
classical definition of transverse invariant measure in the
context of foliated spaces is a measure on topological
transversals invariant by holonomy transformations (see {\em
e.g.\/} \cite{Candel-Conlon}). These two notions of transverse invariant measures agree
for foliated spaces \cite{Connes}.

Our principal tools in this setting are the following two results.

\begin{prop}[Lusin, see {\em e.g.\/} \cite{Srivastava}]\label{p:Lusin1}
Let $X$ and $Y$ be standard Borel spaces and $f:X\to Y$ a
measurable map with countable fibers. Then $f(X)$ is Borel in $Y$
and there exists a measurable section $s:f(X)\to X$ of $f$. In
particular, if $f$ is injective, then $s$ is a Borel isomorphism.
Moreover there exists a countable Borel partition, $X=\bigcup_i
X_i$, so that each restriction $f|_{X_i}$ is injective.
\end{prop}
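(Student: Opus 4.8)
The plan is to reduce the statement to two classical theorems of descriptive set theory available in \cite{Srivastava}: the \emph{Lusin--Novikov uniformization theorem} (a Borel subset of a product of two standard Borel spaces having all vertical sections countable is a countable union of graphs of Borel partial functions), and the \emph{Lusin--Souslin theorem} (an injective Borel map between standard Borel spaces sends Borel sets to Borel sets and is a Borel isomorphism onto its image). Granting these, no genuinely new argument is required; the task is only to assemble them into the asserted form.

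First I would consider the ``twisted graph'' $G=\{(y,x)\in Y\times X : f(x)=y\}$, which is Borel since it is the preimage of the diagonal of $Y$ under the measurable map $(y,x)\mapsto(y,f(x))$, and which by hypothesis has all vertical sections $G_y=f^{-1}(y)$ countable. Lusin--Novikov then provides Borel sets $D_n\subseteq Y$ and Borel maps $g_n:D_n\to X$ with $G=\bigcup_n\mathrm{graph}(g_n)$; each $g_n$ is a section of $f$ over $D_n$ (because $(y,g_n(y))\in G$ forces $f(g_n(y))=y$) and is therefore injective. Replacing each $\mathrm{graph}(g_n)$ by its part disjoint from the previous ones --- still the graph of a Borel restriction of $g_n$ --- I may assume the graphs are pairwise disjoint. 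Then every $x\in X$ satisfies $(f(x),x)\in\mathrm{graph}(g_n)$ for exactly one $n$, so $X=\bigsqcup_n X_n$ with $X_n:=g_n(D_n)$; by Lusin--Souslin each $X_n$ is Borel, and $f|_{X_n}$ is injective since $x,x'\in X_n$ with $f(x)=f(x')=y$ both equal $g_n(y)$. This is the required countable partition, and it also shows that $f(X)=\bigcup_n D_n$ is Borel. For the section, disjointify on the target: with $E_1=D_1$ and $E_n=D_n\setminus\bigcup_{k<n}D_k$ one gets a Borel partition of $f(X)$, and $s:=\bigsqcup_n g_n|_{E_n}$ is a Borel section of $f$. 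If $f$ is injective, $s$ is a two-sided Borel inverse of $f:X\to f(X)$, hence a Borel isomorphism (this last point being just Lusin--Souslin applied to $f$ itself).

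The only real obstacle is that essentially all the content sits inside the Lusin--Novikov theorem, whose proof (via Borel codes and a reflection argument, or equivalently the countable-section case of the standard uniformization results in \cite{Srivastava}) I would quote rather than reproduce. Everything downstream --- Borelness of the twisted graph, the two disjointifications, and the appeals to Lusin--Souslin for the Borelness of the $X_n$ and of $f(X)$ --- is routine.
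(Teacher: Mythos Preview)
Your argument is correct: the twisted-graph trick reduces everything to Lusin--Novikov, and the subsequent disjointifications and appeals to Lusin--Souslin are routine and cleanly executed. One minor remark: you do not actually need Lusin--Souslin to see that $X_n=g_n(D_n)$ is Borel, since $X_n=\{x\in X: (f(x),x)\in\mathrm{graph}(g_n)\}$ is already a Borel preimage; but your route is perfectly valid.

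As for comparison with the paper: there is nothing to compare. The paper states this proposition as a classical fact attributed to Lusin and simply cites \cite{Srivastava}; no proof is given. Your write-up is thus strictly more than what the paper offers, and it is exactly the standard derivation one would extract from \cite{Srivastava}.
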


\begin{thm}[Kunugui, Novikov, see {\em e.g.\/} \cite{Srivastava}]\label{t:Srivastava}
Let $\{V_n\}_{n\in\N}$ be a countable base for a Polish space $P$.
Let $B\subset P\times T$ be a Borel set such that
$B\cap(P\times\{t\})$ is open for every $t\in T$. Then there
exists a sequence $\{B_n\}_{n\in\N}$ of Borel sets of $T$ such
that
$$B=\bigcup_n (V_n\times B_n)\;.$$
\end{thm}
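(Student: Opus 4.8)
The plan is to write the decomposition down explicitly and then to check that the pieces are Borel, the latter being the substantive point.

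For $t\in T$ write $B_t:=\{p\in P:(p,t)\in B\}$, an open subset of $P$ by hypothesis, and set $B_n:=\{t\in T:\overline{V_n}\subseteq B_t\}$. First I would verify that $B=\bigcup_n V_n\times B_n$. The inclusion $\supseteq$ is immediate, since $p\in V_n$ and $t\in B_n$ give $p\in V_n\subseteq\overline{V_n}\subseteq B_t$. For $\subseteq$, take $(p,t)\in B$; as $B_t$ is open and $P$ metrizable there is a ball $\{q:d(p,q)<2r\}\subseteq B_t$, and since $\{V_n\}$ is a base there is $n$ with $p\in V_n\subseteq\{q:d(p,q)<r\}$; then $\overline{V_n}\subseteq B_t$, so $t\in B_n$ and $(p,t)\in V_n\times B_n$. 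One could equally use $\{t:V_n\subseteq B_t\}$; the closures only help with the next step.

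It remains to see that each $B_n$ is Borel. Let $E_n:=(\overline{V_n}\times T)\setminus B$, a Borel subset of $P\times T$; then $B_n=T\setminus\pi_T(E_n)$, where $\pi_T$ is the projection to $T$, and the $t$-section of $E_n$ is $\overline{V_n}\setminus B_t$, which is relatively closed in $\overline{V_n}$. If $P$ is locally compact --- the situation for laminations, whose leaves are manifolds --- then, being also second countable, $P$ is $\sigma$-compact, so $\overline{V_n}$ is $\sigma$-compact and hence so are the sections of $E_n$; then $\pi_T(E_n)$ is Borel by the Arsenin--Kunugui theorem on projections of Borel sets with $\sigma$-compact sections, and we are done. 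For a general Polish $P$ the sections are only closed, and ``Borel with closed sections'' does not force a Borel projection (consider the set of branches of a varying tree), so a global argument is needed: one notes that the point-sections $B^p:=\{t:(p,t)\in B\}$ are Borel and that $B^p=\bigcup\{A_n:p\in V_n\}$ with $A_n:=\{t:V_n\subseteq B_t\}$ co-analytic, and then replaces the $A_n$ by Borel subsets without destroying the identity $\bigcup_n V_n\times A_n=B$, using the generalized Luzin separation theorem for analytic sets --- which is itself proved by combining the Lusin selection theorem (Proposition~\ref{p:Lusin1}) with a Cantor--Bendixson derivative.

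The main obstacle is exactly this last step. A single co-analytic $A_n$ need not contain a Borel subset large enough on its own, so the Borel shrinking must be carried out simultaneously for all $n$ in a way compatible with the neighbourhood filters of the (uncountably many) points of $P$; this is where the standard-Borel hypothesis on $T$ and the selection theorem genuinely enter. For the applications in this paper the locally compact route avoids the difficulty altogether.
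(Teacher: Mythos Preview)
The paper does not prove this theorem: it is quoted as a classical result of Kunugui and Novikov with a reference to Srivastava, and is used as a black box thereafter. So there is no ``paper's own proof'' to compare against; what you have written is an independent attempt at a result the authors simply import.

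Your argument in the locally compact case is correct. With $B_n=\{t:\overline{V_n}\subseteq B_t\}$ the set-theoretic identity $B=\bigcup_n V_n\times B_n$ holds, and when $P$ is locally compact second countable each $\overline{V_n}$ is $\sigma$-compact, so the sections of $E_n=(\overline{V_n}\times T)\setminus B$ are $\sigma$-compact and Arsenin--Kunugui gives that $\pi_T(E_n)$, hence $B_n$, is Borel. As you observe, this already covers every use of the theorem in the paper, since measurable charts take values in $P\times T$ with $P$ locally compact Polish.

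For general Polish $P$ your sketch is not yet a proof. You correctly identify that the naive $A_n=\{t:V_n\subseteq B_t\}$ are only co-analytic, and that one must simultaneously shrink them to Borel sets without losing $\bigcup_n V_n\times A_n=B$; this is indeed the content of the Novikov (generalized first) separation theorem, and the standard proof of the Kunugui--Novikov uniformization goes exactly this way. But your parenthetical claim that the separation theorem ``is itself proved by combining the Lusin selection theorem (Proposition~\ref{p:Lusin1}) with a Cantor--Bendixson derivative'' is off: Proposition~\ref{p:Lusin1} is about countable-to-one Borel maps and plays no role here, and the usual route is via the $\Pi^1_1$ boundedness/reduction machinery or an iteration of Lusin's first separation theorem. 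So the general case stands as a citation of Novikov's theorem rather than an argument, which is fine---that is exactly what the paper does too---but you should not present it as a derivation from Proposition~\ref{p:Lusin1}.
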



\begin{lemma}\label{l:cocycle}
Let $\varphi_i:U_i\to P\times T_i$ and $\varphi_j:U_j\to P\times
T_j$ be measurable Polish charts of $X$. There exists a sequence
of Borel sets of $T_i$, $\{B_n\}_{n\in\N}$, and a base of $P$,
$\{V_n\}_{n\in\N}$, such that $\varphi_i(U_i\cap
U_j)=\bigcup_n(V_n\times B_n)$ and
$\varphi_j\circ\varphi_i^{-1}(x,t)= (g_{ijn}(x,t), f_{ijn}(t))$
for $(x, t)\in V_n\times B_n$, where each $f_{ijn}$ is a Borel
isomorphism and each $g_{ijn}$ is an MT-map.
\end{lemma}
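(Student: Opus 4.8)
The strategy is to apply the Kunugui--Novikov theorem (Theorem~\ref{t:Srivastava}) to the set $\varphi_i(U_i\cap U_j)$, and then to correct the transverse component of the change of coordinates by means of Lusin's theorem (Proposition~\ref{p:Lusin1}). Since $P$ is locally connected and second countable, I would first fix a countable base $\{V_n\}_{n\in\N}$ of $P$ whose members are \emph{connected} open sets. The domains $U_i$ and $U_j$ are open and measurable in $X$ and $\varphi_i$ is an MT-isomorphism, so $\varphi_i(U_i\cap U_j)$ is an open and Borel subset of $P\times T_i$; moreover each of its slices $\varphi_i(U_i\cap U_j)\cap(P\times\{t\})$ is open in $P$, because $\varphi_i$ is a homeomorphism and $T_i$ carries the discrete topology. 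Theorem~\ref{t:Srivastava} then provides Borel sets $B_n^{0}\subseteq T_i$ with $\varphi_i(U_i\cap U_j)=\bigcup_n V_n\times B_n^{0}$.

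Next I would write the change of coordinates as $\psi=\varphi_j\circ\varphi_i^{-1}=(\psi_1,\psi_2)$, with $\psi_1$ valued in $P$ and $\psi_2$ valued in $T_j$. The crucial observation is that on each product $V_n\times B_n^{0}$ the component $\psi_2$ depends only on $t$: for fixed $t\in B_n^{0}$ the set $V_n\times\{t\}$ is connected, hence so is its image under the continuous map $\psi$, and since $T_j$ is discrete this image lies in a single slice $P\times\{t''\}$, so $\psi_2$ is constant on $V_n\times\{t\}$. Equivalently, $\varphi_i^{-1}(V_n\times\{t\})$ is contained in the plaque $\varphi_j^{-1}(P\times\{t''\})$ of $\varphi_j$. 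Fixing once and for all a point $x_0\in V_n$, I can therefore define $f_n^{0}\colon B_n^{0}\to T_j$ by $f_n^{0}(t)=\psi_2(x_0,t)$; it is Borel, being the composite of the Borel section $t\mapsto(x_0,t)$ with the measurable map $\psi_2$.

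The step I expect to be the real obstacle is to prove that $f_n^{0}$ has countable fibres, which is what makes Lusin's theorem applicable and is where the defining property of transversals enters. Suppose $f_n^{0}(t)=f_n^{0}(t')=t''$. By the previous paragraph both $\varphi_i^{-1}(V_n\times\{t\})$ and $\varphi_i^{-1}(V_n\times\{t'\})$ lie in the plaque $\varphi_j^{-1}(P\times\{t''\})$, which, being homeomorphic to the connected space $P$, is contained in a single leaf $L$. Hence $\varphi_i^{-1}(x_0,t)\in L$ for every $t\in(f_n^{0})^{-1}(t'')$. But $\varphi_i^{-1}(\{x_0\}\times T_i)$ is a transversal meeting $L$ in a countable set --- the plaques of $\varphi_i$ contained in $L$ being pairwise disjoint nonempty open subsets of the second countable space $L$, and each containing exactly one point of $\varphi_i^{-1}(\{x_0\}\times T_i)$ --- so the injective map $t\mapsto\varphi_i^{-1}(x_0,t)$ carries $(f_n^{0})^{-1}(t'')$ into a countable set; hence that fibre is countable.

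Finally, Proposition~\ref{p:Lusin1} gives, for each $n$, a countable Borel partition $B_n^{0}=\bigcup_k B_{n,k}$ on each piece of which $f_n^{0}$ is injective, hence (again by Proposition~\ref{p:Lusin1}) a Borel isomorphism onto its Borel image. Reindexing the countable family $\{(V_n,B_{n,k})\}_{n,k}$ as $\{(V_n,B_n)\}_{n\in\N}$ --- where we may assume, adjoining sets $B_n=\emptyset$ if necessary, that the $V_n$ still run over the chosen base of $P$ --- one obtains $\varphi_i(U_i\cap U_j)=\bigcup_n V_n\times B_n$, and on each $V_n\times B_n$ the change of coordinates takes the form $\psi(x,t)=(g_{ijn}(x,t),f_{ijn}(t))$ with $f_{ijn}$ a Borel isomorphism. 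It remains to note that $g_{ijn}$ is the restriction of $\psi$ followed by the projection $P\times T_j\to P$, which is continuous and sends products of Borel sets to Borel sets, so $g_{ijn}$ is an MT-map, as required.
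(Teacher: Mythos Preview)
Your argument is correct, and it follows a genuinely different route from the paper's.  The paper applies Theorem~\ref{t:Srivastava} \emph{twice}: first to $\varphi_j(U_i\cap U_j)\subset P\times T_j$, obtaining pieces $V_k\times B'_k$, and then again to each preimage $\varphi_i\circ\varphi_j^{-1}(V_k\times B'_k)\subset P\times T_i$, obtaining pieces $V_n\times B'_{k,n}$.  The payoff of this double decomposition is that the transition map sends $V_n\times B'_{k,n}$ into the fixed rectangle $V_k\times B'_k$; since $V_k$ is connected, the whole $\varphi_j$-plaque $V_k\times\{t''\}$ pulls back into a single $\varphi_i$-plaque, and one reads off \emph{injectivity} of each $f_{ijkn}$ directly from this back-and-forth connectedness argument, without ever invoking the leaf structure or Lusin's partition step.

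You instead apply Theorem~\ref{t:Srivastava} once, accept that the resulting transverse maps $f_n^{0}$ need only have countable fibres, and then repair this with the partition clause of Proposition~\ref{p:Lusin1}.  Your countable-fibre argument uses an extra ingredient the paper's proof avoids, namely that the chart transversal meets each leaf countably (via second countability of leaves), but in exchange you save one invocation of Theorem~\ref{t:Srivastava} and your decomposition is arguably more transparent.  One small point of exposition: in the last sentence, measurability of $g_{ijn}$ follows because the projection $P\times T_j\to P$ is measurable (preimages of Borel sets are Borel), not because it sends products forward to Borel sets; the conclusion is unaffected.
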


\begin{proof}
We apply Theorem \ref{t:Srivastava} to $\varphi_j(U_i\cap U_j)$,
with a base $\{V_n\}_{n\in\N}$ consisting of connected open sets of
$P$, and obtain a family of sets $V_n\times B'_n$ such that
$\varphi_j(U_i\cap U_j)=\bigcup_n(V_n\times B'_n)$. Now we apply
Theorem \ref{t:Srivastava} to each set
$\varphi_i\circ\varphi_j^{-1}(V_k\times B'_k)$, $k\in\N$. We
obtain sequences $B'_{k,n}$ such that
$$
\varphi_i\circ\varphi_j^{-1}(V_k\times B'_k)= \bigcup_n(V_n\times
B'_{k,n})\;,\qquad k\in\N\;.
$$
The sets $\varphi_j\circ\varphi_i^{-1}(V_n\times\{t\})$, $t\in
B'_{k,n}$, are contained in a single plaque of the form
$\R^n\times \{\ast\}$ since each $V_n$ is connected. Hence
$$
\varphi_j\circ\varphi_i^{-1}(x,t)= (g_{ijkn}(x,t), f_{ijkn}(t))
$$
for $(x,t)\in V_n\times B'_{k,n}$. We shall show that $f_{ijkn}$
is bijective. If there exist $t,t'\in T_i$ with
$f_{ijkn}(t)=f_{ijkn}(t')=t''$, then $g_{ijkn}(V_n\times\{t\})$
and $g_{ijkn}(V_n\times\{t'\})$ are contained in the plaque
$V_{k}\times\{t''\}$ of $V_k\times B'_{k}$, but this plaque is
image by $\varphi_j\circ\varphi_i^{-1}$ of a given connected open
set since $\varphi_j$ and $\varphi_i$ are homeomorphisms, and
$V_k$ is connected. Thus the image by
$\varphi_i\circ\varphi^{-1}_j$ of the plaque $V_k\times\{t''\}$ is
contained in a single plaque of $U_i$. This contradicts $t\neq t'$.

It is easy to show that the maps $f_{ijkn}$ are measurable since
they can be given as a composition of a projection with the
restriction of the cocycle map $\varphi_j\circ\varphi_i^{-1}$ to
$B'_{k,n}$. Finally, the maps $f_{ijkn}$ are Borel isomorphisms to
their images by Proposition~\ref{p:Lusin1}.
\end{proof}

\begin{defn}
A foliated measurable atlas $\UU$ is called {\em regular\/} if,
for each chart $(U,\varphi)\in\UU$, there exists another
measurable foliated chart $(W,\psi)$ such that the closure of each
plaque in $U$ is compact, $\overline{U}\subset W$,
$\varphi=\psi|_U$, and, for every pair of charts $(U_1,\varphi_1),(U_2,\varphi_2)\in\mathcal{U}$,
each plaque of $(U_1,\varphi_1)$ meets at most one plaque of $(U_2,\varphi_2)$. The set
$\overline{U}$ is also measurable for all $(U,\varphi)\in\UU$.
\end{defn}

This definition is weaker than the corresponding one for usual foliations (see
{\em e.g.\/} \cite{Candel-Conlon}). The locally finite condition
cannot be considered in the measurable setting since there is no
ambient topology. The following result follows from
Lemma~\ref{l:cocycle}.

\begin{cor}
A measurable foliated space with a foliated measurable atlas, such
that each chart meets a finite number of charts, admits a regular
measurable foliated atlas.
\end{cor}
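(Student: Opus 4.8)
The plan is to produce the regular atlas by shrinking each chart only in the Polish direction, keeping the transverse factors $T_i$ untouched so that the refined atlas stays countable; this parallels the classical regularization of foliated atlases, with Lemma~\ref{l:cocycle} replacing the usual plaque‑by‑plaque description of chart overlaps. Write the given atlas as $\{(U_i,\varphi_i)\}_{i\in I}$ with $\varphi_i\colon U_i\to P\times T_i$, and for each $i$ let $J_i=\{\,j\in I: U_i\cap U_j\neq\emptyset\,\}$, which is finite by hypothesis. Since $P$ is locally compact, connected and locally path connected, its connected open subsets with compact closure form a basis; for each $j\in J_i$, Lemma~\ref{l:cocycle} applied to $(\varphi_i,\varphi_j)$ gives a basis $\{V^{ij}_n\}_n$ of $P$ by connected open sets over which the coordinate change is plaque preserving,
\[
\varphi_j\circ\varphi_i^{-1}(x,t)=\bigl(g_{ijn}(x,t),\,f_{ijn}(t)\bigr)\qquad\text{on }V^{ij}_n\times B^{ij}_n .
\]
Because $J_i$ is finite, one can fix a countable basis $\mathcal V_i$ of $P$ whose members $V$ are connected, have $\overline V$ compact, are each contained in some $V^{ij}_n$ for every $j\in J_i$, and each admit a connected open ``companion'' $V^{\sharp}$ with $\overline V\subset V^{\sharp}$ and $\overline{V^{\sharp}}$ compact. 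I then take $\UU$ to be the countable family of all $\bigl(\varphi_i^{-1}(V\times T_i),\,\varphi_i|\bigr)$ with $i\in I$, $V\in\mathcal V_i$, each domain reparametrised through $\varphi_i$ so that $V$ (again a locally compact, connected, locally path connected Polish space, being open in $P$) plays the role of the Polish factor; these domains are open and measurable because $V\times T_i$ is open and Borel, and $\UU$ covers $X$ since each $\mathcal V_i$ covers $P$.

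Next I would verify the metric clauses of regularity. For $U=\varphi_i^{-1}(V\times T_i)\in\UU$, set $W=\varphi_i^{-1}(V^{\sharp}\times T_i)$ and $\psi=\varphi_i|_W$; then $\psi|_U=\varphi$, and the closure in $W$ of a plaque $\varphi_i^{-1}(V\times\{t\})$ is $\varphi_i^{-1}(\overline V\times\{t\})\cong\overline V$, which is compact and, since $\overline V\subset V^{\sharp}$, contained in $W$. For the containment $\overline U\subset W$ in the ambient topology one argues — exactly as in the classical shrinking lemma, using the separation of the ambient space and that a point of $\overline U$ outside $U_i$ would lie in some $U_j$ with $j\in J_i$ and be a limit of points of $U_i\cap U_j$ with $\varphi_i$‑coordinates trapped in the relatively compact set $\overline V$ — that, after shrinking the members of $\mathcal V_i$ if necessary, $\overline U\subset U_i$, whence $\overline U=\varphi_i^{-1}(\overline V\times T_i)\subset V^{\sharp}\times T_i$ in coordinates, i.e. $\overline U\subset W$. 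Since $\overline V\times T_i$ is closed, hence Borel, in $V^{\sharp}\times T_i$ and $\psi$ is an MT‑isomorphism, $\overline U=\psi^{-1}(\overline V\times T_i)$ is measurable, as required.

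The main obstacle is the remaining clause: that a plaque of one chart of $\UU$ meet at most one plaque of any other. Let $U=\varphi_i^{-1}(V\times T_i)$ and $U'=\varphi_j^{-1}(V'\times T_j)$ belong to $\UU$ with $U\cap U'\neq\emptyset$, so $j\in J_i$, and fix a plaque $D=\varphi_i^{-1}(V\times\{t\})$. In $\varphi_i$‑coordinates, $D\cap U_j$ equals $\bigl(V\cap\bigcup_{n:\,t\in B^{ij}_n}V^{ij}_n\bigr)\times\{t\}$, and on each connected piece of this slice the image under the plaque‑preserving map $\varphi_j\circ\varphi_i^{-1}$ is connected, hence — the transverse factor $T_j$ being discrete — contained in a single plaque of $U_j$; thus $D$ meets at most as many plaques of $U_j$ (and a fortiori of $U'\subset U_j$) as there are connected components of that slice. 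What remains is to arrange, by a further iteration of the shrinking, that for every $i$, every $j\in J_i$ and every relevant $t$ this slice is connected (for which it suffices that $V\subset\bigcup_{n:\,t\in B^{ij}_n}V^{ij}_n$ whenever that union meets $V$) — a finite requirement for each $i$ because $J_i$ is finite, and the point at which Lemma~\ref{l:cocycle} is genuinely indispensable, since it converts the clause into a statement about the countable families $\{V^{ij}_n\}$ and $\{B^{ij}_n\}$ that can be met by choosing the members of $\mathcal V_i$ small enough relative to this data. Granting this, every plaque of a chart of $\UU$ meets at most one plaque of every other chart, and, with the previous paragraph, $\UU$ is a regular measurable foliated atlas, which proves the corollary.
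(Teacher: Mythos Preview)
The paper offers no argument beyond the sentence ``follows from Lemma~\ref{l:cocycle}'', so there is nothing to compare at the level of strategy; your write--up is in fact the first place where the construction is spelled out. Your overall plan---shrink in the Polish direction and use Lemma~\ref{l:cocycle} to control the overlaps---is the natural one, and the first two paragraphs are essentially fine (with the caveat that your appeal to ``separation of the ambient space'' for $\overline U\subset W$ is delicate: the paper explicitly allows non--Hausdorff leaves, so that step deserves more care, or a slightly weaker reading of the closure condition).

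The real gap is in the last paragraph. You correctly reduce the ``one plaque meets at most one plaque'' clause to the connectedness of the slice $V\cap O^j_t$, where $O^j_t=\bigcup_{n:\,t\in B^{ij}_n}V^{ij}_n$, and you propose the sufficient condition ``$V\subset O^j_t$ whenever $O^j_t$ meets $V$''. But calling this ``a finite requirement because $J_i$ is finite'' is misleading: for each fixed $j$ the condition must hold for \emph{every} $t\in T_i$, and the family $\{O^j_t\}_{t\in T_i}$ is uncountable. Nothing prevents the boundaries $\partial O^j_t$ from sweeping through an open subset of $P$ as $t$ varies (for instance, if $\varphi_i(U_i\cap U_j)$ misses a ``moving'' point $\{(c(t),t)\}$ in each plaque), and then no nonempty open $V$ meeting that region can satisfy your condition simultaneously for all $t$. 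Your earlier requirement that each $V$ lie inside some fixed $V^{ij}_{n(j)}$ does not rescue this: it only controls the slice for those $t$ with $t\in B^{ij}_{n(j)}$.

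What Lemma~\ref{l:cocycle} actually hands you is a \emph{Borel partition of the transversal}: on each piece $V^{ij}_n\times B^{ij}_n$ the transition is plaque--preserving. The fix is therefore not to insist on keeping $T_i$ intact, but to refine the atlas to charts of the form $\varphi_i^{-1}(V\times S)$ with $S$ a Borel subset of $T_i$ chosen so that, for each $j\in J_i$, either $S\subset B^{ij}_n$ for some $n$ with $V\subset V^{ij}_n$, or $V\times S$ is disjoint from $\varphi_i(U_i\cap U_j)$. Because $J_i$ is finite and each decomposition from Lemma~\ref{l:cocycle} is countable, the common refinement of these conditions over $j\in J_i$ still produces a countable atlas, and on each resulting chart the plaque--meeting clause is immediate from the $(g_{ijn},f_{ijn})$ form of the transition. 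This is presumably what the paper means by ``follows from Lemma~\ref{l:cocycle}'', and it is exactly the step your proposal stops short of.
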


From now on, we always consider measurable laminations that
admit regular measurable foliated atlases.

\begin{exmp}[Measurable wedge]
Let $\FF$ and $\GG$ be measurable laminations, and let $T$
and $T'$ be complete transversals consisting of isolated points in leaves. Suppose that there
exists a measurable bijection $\gamma:T\to T'$ such that
$\gamma:T/\FF\to T'/\GG$ is bijective (the bijection take points
of each leaf to a single leaf in both directions). The {\em
measurable wedge\/} $\FF\vee^T_{\gamma}\GG$, relative to the pair
$(T,\gamma)$, is the quotient MT-space of $\FF\sqcup\GG$ by the
relation $t\sim \gamma(t)$ for $t\in T$. The measurable wedge is a measurable
lamination since $T$ and $T'=\gamma(T)$ consist of
isolated points on the leaves. If $T$ (and $T'$) meets each leaf
in only one point, then the measurable wedge is a measurable
lamination where any leaf is a wedge of two leaves by the latest
condition on $\gamma$. Unfortunately, in many measurable
laminations there is no measurable transversal with
this property.
\end{exmp}

\begin{exmp}[Measurable suspensions]
Let $P$ be a connected, locally path connected and semi-locally $1$-connected Polish space, and let $S$
be a standard space. Let $\Meas (S)$ denote the group of
measurable transformations of $S$. Let
$$h : \pi_{1}(P, x_{0})\to \Meas(S)$$ be a homomorphism. Let
$\widetilde{P}$ the universal covering of $P$ and consider
the action of $\pi_{1}(P, x_{0})$ on the MT-space
$\widetilde{P}\times S$ given by
\begin{align*}
g\cdot(x, y) = (xg^{-1}, h(g)(y))\;.
\end{align*}
The corresponding quotient MT-space, $\widetilde{P}\times_h S$, will be called the {\em measurable suspension\/} of
$h$. $\widetilde{P}\times_h S$ is a measurable lamination, $\{\ast\}\times S$
is a complete transversal, and its leaves are covering spaces of $P$.
\end{exmp}

\begin{exmp}[Measurable graphs]
Measurable graphs are measurable laminations such that
every leaf is a graph in the classical sense. In this setting, any
plaque is a finite wedge of open intervals. Of course, the
measurable wedge of measurable graphs is a measurable graph.
\end{exmp}

\section{Measurable singular cohomology}

The idea of introducing a measurable cohomology for MT-spaces is an attempt to
define an algebraic invariant giving information about the
mixing of their topology and measurable structure. The natural
setting is, surprisingly, the cohomology and not the homology:
measurability has an obvious simple interpretation for singular cochains, whilst it seems to be difficult to introduce for
singular chains. We also suppose that $\Gamma$ is a standard commutative group
or unitary ring; {\em i.e.\/}, $\Gamma$ is an abelian group or unitary ring and a standard space where all the
operations and the inverse map are measurable.

\begin{defn}[Measurable prism]
A {\em meaurable prism\/} is a product of a standard Borel space
$T$ and a linear region of $\R^N$ (for instance a polygon) with
the standard MT-structure. A {\em measurable simplex\/} is a
measurable prism where the topological fiber is a canonical
$n$-simplex $\bigtriangleup^n$. A {\em measurable singular
simplex\/} on $X$ is an MT-map $\sg:\bigtriangleup^n\times T \to
X$.
\end{defn}

Let  $\omega$ be a usual {\em singular $n$-cochain\/} over a
coefficient ring $\Gamma$. It is said that $\omega$ is {\em
measurable\/} if $\omega_{\sg}:T\to\Gamma$, $t\mapsto
\omega(\sg_{|\bigtriangleup\times \{t\}})$, is measurable for all
measurable singular $n$-simplex $\sg$. The set of measurable
cochains is a subcomplex of the complex of usual cochains since
the coboundary operator $\delta$ preserves the measurability. This
measurable subcomplex is denoted by $C^*_{MT}(X,\Gamma)$, and the
coboundary operator restricted to this complex is also denoted by
$\delta$.

The {\em singular measurable cohomology\/} is defined as
usual by
$$H^n_{\MT}(X,\Gamma)=\Ker \delta_n/\Image\delta_{n-1}\;.$$

The usual cup product gives a well defined exterior product on
measurable cochains since the operations in $\Gamma$ are measurable.

The usual formula
$\delta(\omega\smile\theta)=\delta\omega\,\smile\theta
+ (-1)^n\,\omega\smile\delta\theta$ holds. Therefore a cup product is induced in measurable cohomology, obtaining the
graded ring $(H^*_{\MT}(X,\Gamma), +, \smile)$.

Any MT-map $f:X\to Y$ defines a cochain map $f^*:C^*_{\MT}(Y,\Gamma)\to C^*_{\MT}(X,\Gamma)$ by $f^*(\omega)(\sg)=\omega(f\circ\sg)$, which in turn
induces a homomorphism between measurable cohomology groups, $f^*:H_{\text{\rm MT}}^*(Y,\Gamma)\to H_{\text{\rm MT}}^*(X,\Gamma)$.

Let $U\subset X$ be an MT-subspace of $X$. The inclusion map
determines a chain map $i^*:C^*_{\MT}(X,\Gamma)\to
C^*_{\MT}(U,\Gamma)$. The cochain complex $\Ker(i^*)$
will be denoted by $C^*_{\MT}(X,U,\Gamma)$; it consists of the cochains that vanish on any singular
simplex contained in $U$. The corresponding cohomology groups will be called the {\em measurable relative
cohomology groups\/} of $(X,U)$. By using the Ker-Coker Lemma like in the classical case,
there exists a long exact sequence of cohomology groups (the details are easy to check):
$$
\cdots\to H^{n}_{\MT}(X,U,\Gamma)\to H^n_{\MT}(X,\Gamma)\to
H^n_{\MT}(U,\Gamma)\to H^{n+1}_{\MT}(X,U,\Gamma)\to\cdots
$$

\begin{defn}
Let $X,Y$ be MT-spaces. A {\em measurable homotopy\/} or MT-homotopy is an MT-map
$H:X\times [0,1]\to Y$. It is said that $H(-,0)$ and $H(-,1)$ are
MT-{\em homotopic\/} maps.
\end{defn}
\begin{prop}[Invariance by MT-homotopy]\label{p:invariancetransversals}
Let $f,g:X\to Y$ be MT-homotopic maps. Then $f^*=g^*:H_{\text{\rm MT}}^*(Y,\Gamma)\to H_{\text{\rm MT}}^*(X,\Gamma)$.
\end{prop}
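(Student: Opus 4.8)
The plan is to imitate the classical proof of homotopy invariance of singular cohomology via a chain homotopy (prism operator), and to check that everything can be carried out measurably. Concretely, given the MT-homotopy $H:X\times[0,1]\to Y$ with $H(-,0)=f$ and $H(-,1)=g$, I would construct, for each $n$, a homomorphism $D_n:C^{n}_{\MT}(Y,\Gamma)\to C^{n-1}_{\MT}(X,\Gamma)$ satisfying the usual identity $\delta D + D\delta = g^{*}-f^{*}$ on $C^{*}_{\MT}(Y,\Gamma)$. Dualising the classical prism decomposition of $\bigtriangleup^{n}\times[0,1]$ into $(n+1)$-simplices $\bigtriangleup^{n+1}_{0},\dots,\bigtriangleup^{n+1}_{n}$ (with affine maps $P_i:\bigtriangleup^{n+1}\to\bigtriangleup^{n}\times[0,1]$), one sets, for a measurable singular $n$-simplex $\sg:\bigtriangleup^{n}\times T\to X$ and a measurable $n$-cochain $\omega$ on $Y$,
\[
(D_n\omega)_\sg(t)=\sum_{i=0}^{n-1}(-1)^{i}\,\omega\bigl((H\circ(\sg\times\id_{[0,1]})\circ P_i)|_{\bigtriangleup^{n}\times\{t\}}\bigr)\;,
\]
i.e. $D$ is precomposition with the prism operator for the map $H\circ(\sg\times\id)$.

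The first point to verify is that $D_n\omega$ is again a \emph{measurable} cochain. This is where the MT-structure really enters: I must show that for every measurable singular $n$-simplex $\sg:\bigtriangleup^{n}\times T\to X$ the section $t\mapsto (D_n\omega)_\sg(t)$ is a measurable map $T\to\Gamma$. The key observation is that $\sg\times\id_{[0,1]}:\bigtriangleup^{n}\times[0,1]\times T\to X\times[0,1]$ is an MT-map, hence so is $H\circ(\sg\times\id)$, and composing with the affine embeddings $P_i$ (which are continuous and, being affine in the $\bigtriangleup$-coordinates and fibrewise constant, send measurable singular simplices to measurable singular simplices) yields measurable singular $(n)$-simplices $\bigtriangleup^{n}\times T\to Y$ of the form $\tau_i=H\circ(\sg\times\id)\circ P_i$. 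Wait — one must be slightly careful that $P_i$, which has domain $\bigtriangleup^{n}$ not $\bigtriangleup^{n}\times[0,1]$, is set up so that the composite has the standard-prism form; after reparametrising, each $\tau_i$ is a measurable singular $n$-simplex on $Y$, so $t\mapsto\omega((\tau_i)|_{\bigtriangleup^{n}\times\{t\}})$ is measurable by definition of $\omega\in C^{n}_{\MT}(Y,\Gamma)$, and a finite $\Gamma$-linear combination of measurable maps into the standard group $\Gamma$ is measurable because the group operations on $\Gamma$ are measurable. Thus $D_n\omega\in C^{n-1}_{\MT}(X,\Gamma)$.

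Next I would record that the chain-homotopy identity $\delta_{n-1}D_n+D_{n+1}\delta_n=g^{*}-f^{*}$ holds. This is a purely formal (classical) computation: it is exactly the dual of the identity $\partial P + P\partial = (\text{top face})-(\text{bottom face})$ for the prism operator $P$ on singular chains, evaluated against $\omega$; since $D$ was defined by precomposition it suffices to apply the classical identity to the chain $(H\circ(\sg\times\id))_{\#}$ of the map $H\circ(\sg\times\id)$ and pair with $\omega$. No measurability issue arises here because it is an identity of cochains that is true pointwise in $t$. Finally, passing to cohomology, $\delta D+D\delta=g^{*}-f^{*}$ shows that $g^{*}$ and $f^{*}$ induce the same map $H^{*}_{\MT}(Y,\Gamma)\to H^{*}_{\MT}(X,\Gamma)$.

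The main obstacle, and the only nontrivial step, is precisely the measurability of the prism operator $D_n$: verifying that composing a measurable singular simplex with the fixed affine maps $P_i$ and with the MT-map $H$ produces measurable singular simplices whose associated cochain-evaluations vary measurably in the transverse parameter $t$. Once that is granted, everything else is the classical bookkeeping of the prism construction; I would therefore spend most of the written proof on setting up $\sg\times\id_{[0,1]}$ as an MT-map and on the compatibility of the affine subdivision maps with the product MT-structure, and merely cite the classical identity for the rest.
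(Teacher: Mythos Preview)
Your proposal is correct and follows exactly the paper's own argument: dualise the classical prism operator, using the decomposition of $\bigtriangleup^{n}\times[0,1]$ into $(n+1)$-simplices and the MT-map $H\circ(\sg\times\id_{[0,1]})$, and observe that the resulting cochain homotopy $P(\omega)(\sg)=\sum_i I_i\,\omega(H\circ\sg|_{\Pi_i})$ preserves measurability because each piece is again a measurable singular simplex and $\Gamma$ has measurable operations. (There is a harmless indexing slip in your displayed formula: if $D_n\omega\in C^{n-1}_{\MT}(X,\Gamma)$, the simplex $\sg$ on which you evaluate it should be $(n-1)$-dimensional, with the prism on $\bigtriangleup^{n-1}\times[0,1]$ then producing $n$ simplices of dimension $n$; the paper avoids this by indexing the operator as $P:C^{n+1}\to C^{n}$.)
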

\begin{proof}
The proof is a trivial consequence of the classical proof for
singular cohomology. The measurable homotopy induces a chain
homotopy between $f_*$ and $g_*$ at the chain complex level. The definition is given by cutting on the space
$\bigtriangleup^n\times [0,1]$ into a finite number of
$(n+1)$-simplices $\bigtriangleup^{n+1}$, we will denote these simplices
by $\Pi_i$. Let $H$ be the measurable homotopy between $f$ and
$g$. The map $P:C^{n+1}(X,\Gamma)\to C^{n}(X,\Gamma)$ defined by
$P(\omega)(\sg)=\sum_i I_i\,\omega(H\circ\sg_{|\Pi_i})$, where $I_i$
is the orientation factor: $I_i=1$ if the orientation of $\Pi_i$ agrees with the orientation induced by $\bigtriangleup^n\times[0,1]$, and $I_i=-1$ otherwise; here, the orientation in $\bigtriangleup^n\times[0,1]$ is chosen so that it induces the usual orientation of a simplex in the fiber $\bigtriangleup^n\times\{0\}$ (see \cite{Hatcher} for details). This map is a cochain homotopy between $f^*$ and
$g^*$ preserving measurability. Hence $f^*$ and $g^*$ induce
the same homomorphism in measurable cohomology.
\end{proof}

Let $\UU$ be a countable measurable open covering of $X$ and let
$C^\UU_*(X,\Gamma)$ the set of chains whose simplices are contained in elements of $\UU$.
Clearly, $C^\UU_*(X,\Gamma)$ is a chain subcomplex of $C_*(X,\Gamma)$. The cochain complex $C_{\mathcal{U}}^*(X,\Gamma)$, dual of $C^{\mathcal{U}}_*(X,\Gamma)$, will be
called the cochain complex of $X$ associated to the covering
$\UU$. For the measurable setting, it is enough to check the measurability condition
for this type of cochains on simplices $\sigma:\triangle\times T\to X$, with
$\sg(\bigtriangleup\times T)\subset U$ for some $U\in\UU$. The
respective cohomology groups are denoted by
$H^*_{\UU,\MT}(X,\Gamma)$. Our goal now is to show that these
groups are isomorphic to the original ones. The restriction map
induces a natural homomorphism $i:H^n_{\MT}(X,\Gamma)\to H^n_{\UU,\MT}(X,\Gamma)$. To prove that $i$ is an isomorphism, we adapt the classical proof by using nice subdivisions  of singular simplices adapted to
\UU\ in order to give an inverse map preserving measurability.

\begin{defn}
A {\em linear region\/} of $\R^n$ is a compact and connected set defined by a finite union of finite intersections of {\em half-hyperplanes\/}. Here, a {\em half-hyperplane\/} is the set of points $x\in\mathbb{R}^n$ satisfying a linear inequality  $\phi(x)\leq c$, where $\phi:\mathbb{R}^n\to\mathbb{R}$ is a non-zero linear map and $c\in\R$. Any linear region has the structure of manifold with corners. A linear region $R$ is {\em maximal\/} if its dimension is $n$. Given a linear region $R$ of $\R^n$, its boundary
$\partial R$ can be expressed as a union of linear regions on
 affine submanifolds of $\R^n$ of dimension $i<n$; they are called the {\em $i$-faces\/} of the linear region. Two
linear regions of $\R^n$ are said to be {\em attached\/} if their
intersection is a union of faces.
\end{defn}

\begin{lemma}\label{l:attachedunion}
A finite union of maximal linear regions of $\R^n$, $R_1\cup...\cup R_N$, can be
expressed as a finite union of maximal linear regions $R'_i$,
$i\in\{1,\dots,N'\}$, such that $R'_i$ and $R'_j$ are attached or
disjoint for $i\neq j$, each $R'_i$ is contained in
some $R_j$, and $\bigcup_{i=1}^N\partial
R_i=\bigcup_{j=1}^{N'}\partial R'_j$.
\end{lemma}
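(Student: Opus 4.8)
The plan is to produce the $R'_j$'s as the closures of the connected components of $U\setminus W$, where $U:=\bigcup_{i=1}^{N}R_i$ and $W:=\bigcup_{i=1}^{N}\partial R_i$. To keep this under control I would first pass to a hyperplane arrangement. Let $\mathcal H$ be the finite set of hyperplanes of $\R^n$ that contain an $(n-1)$-dimensional face of some $R_i$. Each $R_i$, being a maximal linear region, equals the closure of its interior, so $\partial R_i\subseteq\bigcup\mathcal H$, and therefore $R_i$ is a finite union of closures of chambers (the $n$-dimensional cells) of $\mathcal H$. Consequently $U$ is a finite union of closed chambers, $W$ is a finite union of facets of chambers, and $\partial U\subseteq W$. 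From $\partial U\subseteq W$ one deduces that $U\setminus W$ is open in $\R^n$; and since $W$ is closed and nowhere dense while $U$ is the closure of its interior, $U\setminus W$ is dense in $U$. As $\mathcal H$ has finitely many chambers, $U\setminus W$ has finitely many components $O_1,\dots,O_{N'}$; each $O_j$ is a union of a family $\mathcal D_j$ of open chambers together with some lower-dimensional faces, so $R'_j:=\overline{O_j}=\bigcup_{D\in\mathcal D_j}\overline D$ is a finite union of bounded convex polytopes.

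The elementary properties now follow quickly. Each $R'_j$ is compact, connected (closure of the connected set $O_j$), of dimension $n$ (it contains a chamber), and defined by a finite union of finite intersections of half-hyperplanes; hence it is a maximal linear region. Density of $U\setminus W$ gives $\bigcup_j R'_j=\overline{U\setminus W}=U=\bigcup_i R_i$. And since $O_j$ is connected, contained in $\bigcup_i R_i$, and disjoint from $\bigcup_i\partial R_i$, the set $O_j\cap R_i=O_j\cap\intr R_i$ is open and closed in $O_j$ for each $i$, so it equals $O_j$ for some $i$; thus $R'_j=\overline{O_j}\subseteq R_i$.

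For the ``attached or disjoint'' condition, fix $j\neq j'$. Then $R'_j\cap R'_{j'}=\bigcup\{\,\overline D\cap\overline{D'}:D\in\mathcal D_j,\ D'\in\mathcal D_{j'}\,\}$, and each $\overline D\cap\overline{D'}$ is a common face of the two chambers, of dimension $<n$. Such a face lies in $\partial R'_j$: any of its points lies in $\overline{D'}$, so every neighbourhood of it meets $D'$, and $D'$ is disjoint from $R'_j=\overline{O_j}$ since an open chamber not in $\mathcal D_j$ is disjoint from $O_j$ and from $W\supseteq\overline{O_j}\setminus O_j$. As a face of a chamber of $\mathcal D_j$ contained in $\partial R'_j$ is a face of the linear region $R'_j$, the intersection $R'_j\cap R'_{j'}$ is a union of faces of $R'_j$, and by symmetry of $R'_{j'}$; so $R'_j$ and $R'_{j'}$ are attached, or disjoint when the intersection is empty.

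The main obstacle is the equality $\bigcup_j\partial R'_j=W$. The inclusion $\subseteq$ is easy: $\partial R'_j=\partial\overline{O_j}\subseteq\overline{O_j}\setminus O_j\subseteq W$, the last step because a point of $\overline{O_j}$ outside $W$ lies in the open set $U\setminus W$, hence in $O_j$. For $\supseteq$ I must show that the passage to components heals over no wall of $W$. Assign to each chamber $D\subseteq U$ its membership profile $S(D):=\{\,i:D\subseteq\intr R_i\,\}$. If a facet $F$ separating two chambers $D,D'$ is \emph{not} contained in $W$, then $S(D)=S(D')$: if some index $i$ had $D\subseteq\intr R_i$ but $D'\not\subseteq\intr R_i$, then $D'\cap R_i=\emptyset$, and the relative interior of $F$ would lie in $\overline D\subseteq R_i$ while being a limit of points of $D'$ outside $R_i$, forcing $F\subseteq\partial R_i\subseteq W$. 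Hence the profile is constant on each $O_j$. Conversely, if $F$ is an $(n-1)$-dimensional piece of $W$ then $F\subseteq\partial R_i$ for some $i$, exactly one of the two chambers adjacent to $F$ lies in $\intr R_i$ (here regular closedness of $R_i$ rules out that neither does), so these two chambers lie in distinct components $O_j\neq O_{j'}$, and the argument of the previous paragraph gives $F\subseteq\partial R'_j$. Running this over all $(n-1)$-dimensional pieces of $W$ yields $W\subseteq\bigcup_j\partial R'_j$. The one delicate point in making this rigorous is the bookkeeping of lower-dimensional faces: one must check that every point of $W$ does lie on some $(n-1)$-dimensional piece of $W$ whose two adjacent chambers have different profiles, which is verified by inspecting the link of the point in the local central arrangement.
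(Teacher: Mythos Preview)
Your argument is correct (the final ``delicate point'' you flag is genuine but minor, and your indicated link argument does dispatch it), but it proceeds quite differently from the paper. The paper argues by induction on $N$: for two regions $R_1,R_2$ it takes the maximal connected components of $R_1\cap R_2$, $\overline{R_1\setminus R_2}$, and $\overline{R_2\setminus R_1}$, observes that these are pairwise attached-or-disjoint linear regions refining $\{R_1,R_2\}$ with the same total boundary, and then feeds in $R_3,\dots,R_N$ one at a time. Your approach is instead global: you pass immediately to the hyperplane arrangement $\mathcal H$ generated by all facets of all $R_i$, and take the $R'_j$ to be the closures of the components of $(\bigcup_i R_i)\setminus(\bigcup_i\partial R_i)$, which you then analyze via chambers and membership profiles. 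The paper's route is shorter and avoids any arrangement machinery, but it leaves more to the reader (for instance, why the boundary identity survives each inductive step). Your route is longer and has more bookkeeping, but it gives an explicit intrinsic description of the output decomposition---namely as the common refinement by the boundary set---which is arguably more informative and makes the ``attached or disjoint'' and boundary conditions transparent once the chamber structure is in place.
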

\begin{proof}
Let $R_1$ and $R_2$ be two maximal linear regions. The connected
components of $R_1\cap R_2$, $\overline{R_1\setminus R_2}$ and
$\overline{R_2\setminus R_1}$ are linear regions. The maximal linear regions of this kind satisfy the
required conditions. Hence the statement is easily proved by
induction on $N$.
\end{proof}

\begin{lemma}\label{l:triangulationlinearregions}
Any finite union of linear regions, such that any pair of them are attached or disjoint, admits a triangulation
$\TT$ that induces a triangulation on each linear region.
\end{lemma}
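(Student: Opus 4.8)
The plan is to reduce the statement to triangulating a \emph{polyhedral complex} whose cells are convex polytopes, and then to invoke the classical ``pulling'' subdivision, which triangulates such a complex without creating new vertices and therefore automatically restricts to every subcomplex.

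First I would produce that complex. Write $U=R_1\cup\dots\cup R_N$ and let $\mathcal{H}$ be the finite set of all hyperplanes bounding the half-hyperplanes used to define the $R_i$. Each $R_i$ is a finite union of compact convex polytopes, and each such polytope is an intersection of closed half-spaces whose boundary hyperplanes lie in $\mathcal{H}$; an elementary observation (a cell of the arrangement $\mathcal{H}$ lies on a single side of, or inside, each hyperplane of $\mathcal{H}$, hence is contained in such a polytope as soon as it meets it) shows that every such polytope, and therefore each $R_i$ and therefore $U$, is a union of closed cells of the arrangement determined by $\mathcal{H}$. Since the $R_i$ are compact only bounded cells occur, and since $U$ is closed all faces of a cell contained in $U$ again lie in $U$. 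Hence the bounded cells contained in $U$ together with their faces form a finite polyhedral complex $\mathcal{K}$ with $|\mathcal{K}|=U$, and each $R_i$ is a union of closed cells of $\mathcal{K}$, i.e. a subcomplex. (Alternatively one can build $\mathcal{K}$ by applying Lemma~\ref{l:attachedunion} repeatedly to the convex pieces of the $R_i$, the hypothesis that the $R_i$ are pairwise attached or disjoint being exactly what makes the top-dimensional pieces fit together; the arrangement is the most economical route.)

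Next I would triangulate $\mathcal{K}$. Fix a linear order on its finite vertex set and construct a simplicial subdivision $\TT$ with the same vertices by induction on skeleta. The $0$-skeleton is already simplicial; assuming the $(k-1)$-skeleton has been triangulated, for each $k$-cell $P$ let $v$ be its least vertex. The boundary $\partial P$ lies in the $(k-1)$-skeleton and is thus already triangulated, and since $P$ is convex, forming the cones with apex $v$ over the simplices lying on the facets of $P$ not containing $v$ produces non-degenerate $k$-simplices that meet along faces and cover $P$, extending the given triangulation of $\partial P$. Doing this for every $k$-cell extends the triangulation to the $k$-skeleton, and after finitely many steps one obtains $\TT$. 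Finally, since each $R_i$ is a subcomplex of $\mathcal{K}$, the simplices of $\TT$ contained in $R_i$ form a subcomplex whose support is $R_i$; this is the induced triangulation, built from affine simplices, as required by the linear structure.

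The step needing the most care is the compatibility in the coning construction: when a facet $F$ is shared by two $k$-cells $P$ and $P'$, the simplices built over $F$ from the $P$-side and from the $P'$-side must agree on $F$. This holds because the triangulation of $F$ is fixed once and for all at stage $\dm F$, before any higher-dimensional cell is processed, and every later coning step merely extends it; one must still check the bookkeeping that the facet of $P'$ containing its least vertex is not coned at stage $k$ but simply inherited from the $(k-1)$-skeleton, so that no clash can arise, and that the far facets of $P$ not through $v$ are precisely those whose triangulations get coned. A minor secondary point, already flagged, is verifying that the bounded arrangement cells and their faces genuinely form a polyhedral complex and that each $R_i$ is a subcomplex; here the compactness of the linear regions and the closedness of $U$ are what is used, while the pairwise-attachment hypothesis is needed only if one prefers the Lemma~\ref{l:attachedunion}-based construction of $\mathcal{K}$.
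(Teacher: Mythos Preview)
Your argument is correct and follows the same overall plan as the paper: first refine the given linear regions into a polyhedral complex of convex polytopes using the arrangement of the defining hyperplanes, then triangulate that complex by a standard procedure so that each original region is a subcomplex. The only genuine difference is in the second step. The paper uses \emph{barycentric subdivision}, inserting the mass center of each face as a new vertex; compatibility on shared faces is then automatic because the subdivision of a face depends only on the face itself. You instead use the \emph{pulling triangulation}, coning from the least vertex of each cell over its far facets, which has the mild advantage of introducing no new vertices but forces you to do the bookkeeping you flag in your last paragraph. Both are standard and adequate here; the paper's choice makes the compatibility verification a one-liner, while yours keeps the vertex set fixed.
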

\begin{proof}
Any linear region can be subdivided by a union of convex linear regions. This subdivision is given by its definition from half-hyperplanes: let $R=\bigcup_{i=1}^K\bigcap_{j=1}^{M_i} H_j^{i,0}$, where the sets $H_j^{i,0}$ are half-hyperplanes, let $H_j^{i,1}=\overline{\R^n\setminus H_j^{i,0}}$. For each $i$, consider the family
$$
D_i=\left\{\,A_1\cap\cdots\cap A_{i-1}\cap\bigcap_{j=1}^{M_i} H_j^{i,0} \cap A_{i+1}\cap\cdots\cap A_K\,\right\}\,,
$$
where $A_k\in\{\,\bigcap_{j=1}^{M_k}\ H_j^{k,i_j}|\ i_j\in\{0,1\}\;\forall\, j\,\}$. The union of the families $D_i$ gives a convex subdivision of $R$. Hence we can suppose that the linear regions considered are convex. The triangulation is given by
standard barycentric subdivision, defining the barycenter of a
linear region as its mass center.
\end{proof}

\begin{defn}[Subdivision of a measurable simplex]\label{d:subdivision of a measurable simplex}
Let $\bigtriangleup\times S$ be a measurable $n$-simplex. A {\em
measurable subdivision\/} of a measurable simplex is a countable
family of measurable $n$-simplices and MT-embeddings
$\phi_i:\bigtriangleup\times S_i\to\bigtriangleup\times S$ such
that $\phi_i(\bigtriangleup\times S_i)$ determines a usual
subdivision on each fiber; {\em i.e.\/}, the family
$\{\phi_i(\bigtriangleup\times S_i)\cap
(\bigtriangleup\times\{s\})\}$ is a usual subdivision of the simplex
$\bigtriangleup\times\{s\}$ for all $s\in S$.
\end{defn}

\begin{prop}\label{p:measurableadaptedsubdivision}
Let $\UU$ be a measurable countable open covering of a measurable
simplex $\bigtriangleup\times T$. There exists a measurable
subdivision of $\bigtriangleup\times T$ such that, with notation
of Definition~\ref{d:subdivision of a measurable simplex}, each
$\phi_i(\bigtriangleup\times S_i)$ is contained in some element of
$\UU$. In fact, there exists a measurable partition
$\{T'_j\}_{j\in\N}$ of $T$ such that the subdivision on
$\bigtriangleup\times T'_j$ is constant; i.e., the same
subdivision in each fiber $\bigtriangleup\times\{*\}$.
\end{prop}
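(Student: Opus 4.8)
The plan is to carry out the classical Lebesgue-number subdivision argument fiberwise and to package the fiber-dependent choice of subdivision into a countable Borel partition of $T$, using Theorem~\ref{t:Srivastava} to make the relevant sets Borel. Fix once and for all a countable base $\{V_n\}_{n\in\N}$ of $\bigtriangleup$ and an enumeration $\UU=\{U_m\}_{m\in\N}$. Since $\bigtriangleup$ is second countable, the measurable sets of $\bigtriangleup\times T$ coincide with the Borel sets of the product, and since the topology of $\bigtriangleup\times T$ is the product of the topology of $\bigtriangleup$ with the discrete topology on $T$, every open set has open fibers; hence Theorem~\ref{t:Srivastava} applies to each $U_m$ and yields Borel sets $B^m_n\subset T$ with $U_m=\bigcup_n V_n\times B^m_n$. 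Let $\mathrm{Sd}^k\bigtriangleup$, $k\ge 0$, be the $k$-fold barycentric subdivision, with top-dimensional simplices $\Delta^k_1,\dots,\Delta^k_{m_k}$, and recall that $\mathrm{mesh}(\mathrm{Sd}^k\bigtriangleup)\to 0$.

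For each $k\in\N$ and each $c\colon\{1,\dots,m_k\}\to\N$ put
$$
T_{k,c}=\{\,t\in T : \Delta^k_i\times\{t\}\subset U_{c(i)}\ \text{for every}\ i\,\}.
$$
The first key step is to see that $T_{k,c}$ is Borel. The fiber of $U_{c(i)}$ over $t$ is $\bigcup\{V_n : t\in B^{c(i)}_n\}$, and, since $\Delta^k_i$ is compact, it is contained in this fiber if and only if it is covered by $\{V_n : n\in F\}$ for some finite $F\subset\{n : t\in B^{c(i)}_n\}$; thus
$$
T_{k,c}=\bigcap_{i=1}^{m_k}\ \bigcup_{\substack{F\subset\N\ \mathrm{finite}\\ \Delta^k_i\subset\bigcup_{n\in F}V_n}}\ \bigcap_{n\in F}B^{c(i)}_n\,,
$$
a countable Boolean combination of Borel sets. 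The second step is that $T=\bigcup_{k,c}T_{k,c}$: given $t\in T$, the sets $U_m\cap(\bigtriangleup\times\{t\})$ form an open cover of the compact metric space $\bigtriangleup$, hence have a Lebesgue number $\varepsilon>0$; picking $k$ with $\mathrm{mesh}(\mathrm{Sd}^k\bigtriangleup)<\varepsilon$, each $\Delta^k_i$ lies in some $U_{c(i)}\cap(\bigtriangleup\times\{t\})$, so $t\in T_{k,c}$. There are only countably many pairs $(k,c)$; enumerating them as $(k_j,c_j)$, $j\in\N$, and disjointifying, $T'_j=T_{k_j,c_j}\setminus\bigcup_{l<j}T_{k_l,c_l}$ is a Borel partition of $T$.

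To finish I would assemble the subdivision. For each $j\in\N$ and each top simplex $\Delta^{k_j}_i$ of $\mathrm{Sd}^{k_j}\bigtriangleup$, fix an affine isomorphism $A_{j,i}\colon\bigtriangleup\to\Delta^{k_j}_i\subset\bigtriangleup$ and set $\phi_{j,i}=A_{j,i}\times\mathrm{id}_{T'_j}\colon\bigtriangleup\times T'_j\to\bigtriangleup\times T$. Each $\phi_{j,i}$ is an MT-embedding (the product of a homeomorphism onto a closed subset of $\bigtriangleup$ with the inclusion of a Borel subset of $T$); the image $\phi_{j,i}(\bigtriangleup\times T'_j)=\Delta^{k_j}_i\times T'_j$ lies in $U_{c_j(i)}\in\UU$ because $T'_j\subset T_{k_j,c_j}$; and for every $t\in T'_j$ the slices $\phi_{j,i}(\bigtriangleup\times T'_j)\cap(\bigtriangleup\times\{t\})=\Delta^{k_j}_i\times\{t\}$ run over the simplicial subdivision $\mathrm{Sd}^{k_j}\bigtriangleup$ of the fiber (all other members of the family being empty over such $t$), which verifies Definition~\ref{d:subdivision of a measurable simplex}. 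The family $\{\phi_{j,i}\}_{j,i}$ is countable, and it is constant on each $\bigtriangleup\times T'_j$, so both assertions hold.

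I expect the main obstacle to be precisely the Borel measurability of $T_{k,c}$: this is where one must combine the Kunugui--Novikov decomposition of the $U_m$ with the compactness of the closed simplices $\Delta^k_i$ to convert the ``for all $x\in\Delta^k_i$'' fiber condition into a countable combination of Borel subsets of $T$. Once this is in place, the Lebesgue-number argument, the disjointification, and the fiberwise-constant reassembly are routine adaptations of the classical subdivision theory.
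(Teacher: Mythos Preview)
Your argument is correct, and it takes a genuinely different route from the paper's own proof.

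The paper proceeds by first choosing as base of $\bigtriangleup$ the slightly enlarged open simplices of all iterated barycentric subdivisions, applies Theorem~\ref{t:Srivastava} once to produce sets $\bigtriangleup_i\times T_i$ with $\overline{\bigtriangleup_i}\times T_i\subset U$ for some $U\in\UU$, and then, for each $t$, the finitely many closed simplices $\overline{\bigtriangleup_i}$ with $t\in T_i$ only \emph{cover} $\bigtriangleup$ rather than triangulate it. To pass from this cover to an honest simplicial subdivision the paper must invoke the two auxiliary lemmas on linear regions (Lemmas~\ref{l:attachedunion} and~\ref{l:triangulationlinearregions}) in order to retriangulate $\bigcup_{i\in I}\overline{\bigtriangleup_i}$ compatibly.

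Your parametrization by pairs $(k,c)$, where $k$ fixes a barycentric level and $c$ assigns to each top simplex of $\mathrm{Sd}^k\bigtriangleup$ an element of $\UU$, sidesteps this entirely: the subdivision $\mathrm{Sd}^k\bigtriangleup$ is already simplicial, so no retriangulation is needed, and Lemmas~\ref{l:attachedunion} and~\ref{l:triangulationlinearregions} are never used. The cost is that you must check directly that each $T_{k,c}$ is Borel, which you do by combining the Kunugui--Novikov decomposition of the $U_m$ with the compactness of the closed simplices $\Delta^k_i$; this is exactly the point you flagged, and your formula for $T_{k,c}$ as a countable Boolean combination of the $B^{c(i)}_n$ is correct. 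The Lebesgue-number step and the disjointification are then routine. In short, your proof is a cleaner and more self-contained variant that trades the linear-region lemmas for a slightly larger (but still countable) index set.
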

\begin{proof}
Take a base of open subsets of $\bigtriangleup^n$ given by the
barycentric subdivisions, where the simplices of these
subdivisions are slightly augmented to be open sets and their
closures are also simplices. Order this base of augmented
simplices and denote it by
$\BB=\{\bigtriangleup_1,\dots,\bigtriangleup_i,\dots\}$. By using
Theorem~\ref{t:Srivastava} on each $U\in\UU$, there exists a
sequence $\{T_i\}_{i\in\N}$ of Borel subsets of $T$ such that
$\bigtriangleup\times T=\bigcup_i (\bigtriangleup_i\times T_i)$, and
each $\overline{\bigtriangleup_i}\times T_i$ is contained in some
$U\in\UU$. Observe that, in general, the family $\{T_i\}_{i\in\N}$
is not a partition of $T$.

For each $N\in\N$, let $S^N_{1,...,N}=\bigcap_{i=1}^N T_i$ and let
$I\subset\{1,...,N\}$. Define recursively $S^N_{I}=(\bigcap_{i\in
I} T_{i})\setminus\bigcup_{J\varsupsetneq I}\bigcap_{j\in J}T_j$.
They are a finite number of disjoint measurable transversals, and,
clearly, $\bigcup_{i=1}^{N}\overline{\bigtriangleup_i}\times
T_i=\bigsqcup_{I}(\bigcup_{i\in
I}\overline{\bigtriangleup}_i)\times S^N_I$.

By compactness, each fiber is contained in a finite union
$\bigcup_{i=1}^{N}(\overline{\bigtriangleup_i}\times T_i)$ for some
$N$ large enough; in fact, it is contained in some of the
sets $(\bigcup_{i\in I}\overline{\bigtriangleup}_i)\times S^N_I$. The
family of transversals $S^N_I$ is clearly countable, and let
$\{S_n\}_{n\in\N}$ be an enumeration of this family. Let
$\{S_{n_k}\}_{k\in\N}$ the subfamily of transversals such that
their topological fibers ($\bigcup_{i\in
I(k)}\overline{\bigtriangleup}_i$ for $S_{n_k}=S^{N(k)}_{I(k)}$)
cover $\bigtriangleup$. Let $T'_1=S_{n_1}$, and define recursively
$T'_k=S'_{n_k}\setminus\bigcup_{j=1}^{k-1}S_{n_j}$. This family is
a measurable partition of $T$ and every $T'_k$ is contained in
$S^{N(k)}_{I(k)}$. For each $k\in\N$, Lemmas~\ref{l:attachedunion}
and \ref{l:triangulationlinearregions} provide a triangulation of
$\bigtriangleup$ that induces subdivisions in the family of
simplices $\{\overline{\bigtriangleup}_i\}_{i\in I(k)}$. This
triangulation is extended to a measurable triangulation in the
measurable prism $(\bigcup_{i\in
I(k)}\overline{\bigtriangleup}_{i})\times T'_k$ by taking exactly
the same triangulation on each topological fiber.  Hence the
measurable subdivision induced on $\bigtriangleup\times T$
satisfies the conditions of the statement.
\end{proof}

\begin{cor}\label{cor:invariancesubdivision}
Let $\UU$ be a measurable countable open covering of $\FF$. Then
$i:H^n_{\MT}(\FF,\Gamma)\to H^n_{\UU,\MT}(\FF,\Gamma)$ is an isomorphism for
all $n\in\N$.
\end{cor}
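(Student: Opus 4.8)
The plan is to mimic the classical proof that singular cohomology can be computed from the subcomplex of $\UU$-small cochains, but carrying measurability through at every step. The restriction map $i$ is induced by the inclusion $C^\UU_*(\FF,\Gamma)\hookrightarrow C_*(\FF,\Gamma)$; what I want is an inverse. Classically one builds a subdivision operator $S\colon C_*(\FF,\Gamma)\to C_*(\FF,\Gamma)$ together with a chain homotopy $D$ with $\partial D+D\partial=\id-S$, iterates $S$ until a given simplex becomes $\UU$-small, and assembles the iterate count into a well-defined chain homotopy on all of $C_*$. At the cochain level this dualizes to maps $S^\ast$ and $D^\ast$ witnessing that $i$ is a cohomology isomorphism. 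So the real content is: every one of these operators must send \emph{measurable} cochains to \emph{measurable} cochains.

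First I would fix the subdivision operator $S$ to be ordinary barycentric subdivision applied fiberwise: given a measurable singular simplex $\sg\colon\bigtriangleup^n\times T\to\FF$, the operator $S\sg$ is the formal sum $\sum_j \pm\,\sg\circ(\beta_j\times\id_T)$, where the $\beta_j\colon\bigtriangleup^n\to\bigtriangleup^n$ are the (finitely many, fixed) affine maps onto the top simplices of the barycentric subdivision. Since each $\beta_j$ is a fixed affine map, $t\mapsto\omega\bigl(S\sg|_{\bigtriangleup^n\times\{t\}}\bigr)=\sum_j\pm\,\omega\bigl(\sg\circ(\beta_j\times\id)|_{\bigtriangleup^n\times\{t\}}\bigr)$ is a finite $\Gamma$-linear combination of the measurable functions $t\mapsto\omega(\sg\circ(\beta_j\times\id)|_{\ldots})$; measurability of $\Gamma$'s operations then gives that $S^\ast\omega$ is measurable. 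The prism-operator $D$ is handled identically: it is built from the same finite collection of fixed affine maps $\bigtriangleup^{n+1}\to\bigtriangleup^n\times[0,1]$ used in the proof of Proposition~\ref{p:invariancetransversals}, so $D^\ast$ also preserves measurability by the same finite-linear-combination argument.

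The step that needs Proposition~\ref{p:measurableadaptedsubdivision} is the passage from ``$S$ and $D$'' to the actual chain homotopy realizing $i$ as an isomorphism, because the number $m=m(\sg)$ of iterations of $S$ needed to make $\sg$ $\UU$-small varies with $\sg$, and in the measurable setting it could in principle vary with the transverse parameter $t$ as well. This is exactly what Proposition~\ref{p:measurableadaptedsubdivision} rules out: pulling back $\UU$ along $\sg$ gives a measurable countable open cover of $\bigtriangleup^n\times T$, and the proposition produces a measurable partition $T=\bigsqcup_j T'_j$ on each piece of which a \emph{constant} (fiber-independent) iterated-barycentric subdivision already refines the pullback cover. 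Hence there is a measurable function $t\mapsto m(\sg,t)$, constant on each $T'_j$, bounding the required number of subdivisions, and the classical formula $D_{m(\sg,t)}=\sum_{0\le r<m(\sg,t)} D S^{r}$ (assembled separately over each $T'_j$ and glued) defines an operator whose dual sends measurable cochains to measurable cochains — the restriction to $\bigtriangleup^n\times T'_j$ is a fixed finite linear combination of the already-measurable operators $D^\ast (S^\ast)^r$, and gluing over the countable measurable partition preserves measurability.

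I expect the main obstacle to be purely bookkeeping: checking that the classical identities $i\circ S=\id$ on $\UU$-small chains, $\partial D+D\partial=\id-S^{m}$ with the variable exponent, and the resulting cochain-level identities $S^\ast\circ i^\ast=\id$, $i^\ast\circ S^\ast\simeq\id$ all survive verbatim once one knows the variable exponent is a measurable, locally constant (in $t$) function — and that no step silently requires an uncountable operation. No new idea beyond Proposition~\ref{p:measurableadaptedsubdivision} is needed; the proposition is precisely the device that converts ``subdivide enough times, depending on the simplex and the point'' into ``subdivide a measurable-in-$t$ number of times'', which is what makes the dual operators land in $C^\ast_{\MT}$. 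I would therefore write the proof as: recall the classical $S$, $D$; observe their duals preserve measurability by the finite-linear-combination remark; invoke Proposition~\ref{p:measurableadaptedsubdivision} to get the measurable locally constant iteration count; assemble the chain homotopy over the countable measurable partition; and conclude that $i^\ast$ is an isomorphism with inverse induced by $S^\ast$.
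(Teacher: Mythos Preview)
Your proposal is correct and is essentially the same approach as the paper's: both pull back $\UU$ along a measurable singular simplex $\sg$, invoke Proposition~\ref{p:measurableadaptedsubdivision} to obtain a countable measurable partition $\{T^\sg_j\}$ of the transverse parameter with a \emph{constant} subdivision on each piece, and then use that constancy to build an inverse to $i$ that preserves measurability.

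The difference is one of packaging. The paper skips the barycentric operator $S$ and the prism $D$ entirely: it takes the adapted subdivision $\TT^\sg_{i(t)}$ produced by Proposition~\ref{p:measurableadaptedsubdivision} and writes down the inverse directly on closed cochains as
\[
\rho^*(\omega)(\sg)(t)=\sum_{\bigtriangleup\in\TT^\sg_{i(t)}}\omega\bigl(\sg_{|\bigtriangleup\times T^\sg_{i(t)}}\bigr)(t)\;,
\]
leaving implicit why this descends to cohomology. Your route instead follows the classical Hatcher argument, keeping the explicit chain homotopy $D_{m}=\sum_{r<m}D S^r$ and only using Proposition~\ref{p:measurableadaptedsubdivision} to guarantee that the iteration count $m(\sg,t)$ is measurable and locally constant in $t$. (One small caveat: the proposition as stated produces an adapted subdivision, not literally an iterated barycentric one; but its proof shows that on each $T'_j$ the pullback cover is refined by a fixed finite open cover of $\bigtriangleup$, so a Lebesgue-number argument gives the constant $m_j$ you need.) Your version buys an explicit verification that the inverse is well defined on cohomology classes; the paper's buys brevity.
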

\begin{proof}
Notice that any singular chain $\sg:\bigtriangleup\times T^{\sg}\to \FF$ induces on $\bigtriangleup\times T$ a countable
partition of adapted subdivisions with respect to the covering
$\sg^{-1}\UU$ by using the above proposition. Let $\{T^\sg_i\ |\ i\in\N\}$ denote
the partition corresponding to $T_\sg$, and $\TT^\sg_i$ the subdivision
of each fiber of $\bigtriangleup\times T^\sg_i$. For $t\in T_\sg$,
let $i(t)$ be the unique index such that $t\in T^\sg_i$. The
expression of the inverse map of $i^*$ on closed cochains is
$\rho^*(\omega)(\sg)(t)=\sum_{\bigtriangleup\in\TT^\sg_{i(t)}}\omega(\sg_{|\bigtriangleup\times
T^\sg_{i(t)}})(t)$.
\end{proof}

Let $C^*_{\MT}(U+V)$ be the cochain complex given by the measurable
cochains which vanish on measurable singular simplices that
do not lay in either $U$ or $V$, and let $H_{\MT}^*(U+V)$ denote its cohomology. By the previous corollary,
$H^*_{\MT}(\FF_{U\cup V})$ is isomorphic to $H^*_{\MT}(U+V)$ via the restriction
map. In fact, by using the 5-Lemma, $H^*_{\MT}(\FF,\FF_{U\cup V})$ is isomorphic
to $H^*_{\MT}(\FF,U+V)$.

\begin{cor}
The usual cup product induces a well defined cup product in measurable relative
cohomology:
$$
\smile\;:H^n_{\MT}(\FF,\FF_U,\Gamma)\times H^m_{\MT}(\FF,\FF_V,\Gamma)\to
H^{n+m}_{\MT}(\FF,\FF_{U\cup V},\Gamma)\;.
$$
\end{cor}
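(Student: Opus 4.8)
The plan is to reduce the statement to the classical construction of the relative cup product at the cochain level, and then check that the two new ingredients in the measurable setting -- measurability of the resulting cochains and the identification of the target cohomology via the preceding corollaries -- go through without trouble. First I would recall that, classically, given cochains $\omega\in C^n_{\MT}(\FF,\FF_U,\Gamma)$ and $\theta\in C^m_{\MT}(\FF,\FF_V,\Gamma)$, the cup product $\omega\smile\theta$ vanishes on any singular simplex whose image lies in $U\cup V$, \emph{provided} the simplex is small enough to lie entirely in $U$ or entirely in $V$: if $\sg(\bigtriangleup^{n+m}\times\{t\})\subset U$ then the front $n$-face lands in $U$ and $\omega$ kills it, while if it lands in $V$ the back $m$-face lands in $V$ and $\theta$ kills it. Thus $\omega\smile\theta$ represents a class in $H^{n+m}_{\MT}(\FF,U+V,\Gamma)$, where $U+V$ here means the covering $\{U,V\}$ in the sense of $C^*_{\MT}(U+V)$ discussed just above the corollary.

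The second step is to invoke the two facts established in the paragraph immediately preceding the statement: by Corollary~\ref{cor:invariancesubdivision} the restriction map gives an isomorphism $H^*_{\MT}(\FF_{U\cup V},\Gamma)\cong H^*_{\MT}(U+V)$, and by the 5-Lemma applied to the long exact sequences of the pairs $(\FF,\FF_{U\cup V})$ and $(\FF,U+V)$ one gets $H^*_{\MT}(\FF,\FF_{U\cup V},\Gamma)\cong H^*_{\MT}(\FF,U+V,\Gamma)$. Composing the cochain-level cup product $C^n_{\MT}(\FF,\FF_U,\Gamma)\times C^m_{\MT}(\FF,\FF_V,\Gamma)\to C^{n+m}_{\MT}(\FF,U+V,\Gamma)$ with the inverse of this isomorphism on cohomology yields the desired map to $H^{n+m}_{\MT}(\FF,\FF_{U\cup V},\Gamma)$. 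That it is well defined (independent of representatives) follows from the Leibniz formula $\delta(\omega\smile\theta)=\delta\omega\smile\theta+(-1)^n\omega\smile\delta\theta$, which holds for measurable cochains exactly as stated earlier, so that coboundaries are sent to coboundaries and the product descends to cohomology.

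The one genuinely measurable point is that $\omega\smile\theta$ is again a measurable cochain: this is already recorded in the excerpt, since the operations of $\Gamma$ are measurable and the front/back face maps $t\mapsto \sg_{|[e_0,\dots,e_n]\times\{t\}}$ and $t\mapsto \sg_{|[e_n,\dots,e_{n+m}]\times\{t\}}$ depend measurably on $t$, so $t\mapsto\omega(\sg^{\text{front}}_t)\cdot\theta(\sg^{\text{back}}_t)$ is measurable as a product of measurable maps into the standard ring $\Gamma$. Hence the cup product restricts to the measurable relative subcomplexes. I expect the main (and only real) obstacle to be bookkeeping: making sure the relation between $\FF_{U\cup V}$ and the covering complex $U+V$ is applied correctly -- in particular that the cochain-level product genuinely lands in $C^*_{\MT}(\FF,U+V,\Gamma)$ rather than merely in $C^*_{\MT}(\FF,\FF_{U\cup V},\Gamma)$ -- since it is only after subdivision (which is harmless on cohomology by Corollary~\ref{cor:invariancesubdivision}) that a simplex in $U\cup V$ can be assumed to lie in $U$ or in $V$. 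Everything else is a verbatim transcription of the classical argument in \cite{Hatcher}.
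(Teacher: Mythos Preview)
Your proposal is correct and follows exactly the paper's own argument: the cup product lands in $H^{n+m}_{\MT}(\FF,U+V,\Gamma)$ at the cochain level, and one then composes with the isomorphism $H^*_{\MT}(\FF,U+V,\Gamma)\cong H^*_{\MT}(\FF,\FF_{U\cup V},\Gamma)$ established just before the corollary. Your write-up is in fact more detailed than the paper's two-line proof, and your explicit remark that the product only lands in $C^*_{\MT}(\FF,U+V,\Gamma)$ (not directly in $C^*_{\MT}(\FF,\FF_{U\cup V},\Gamma)$) is precisely the point the paper leaves implicit.
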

\begin{proof}
The cup product gives
$$
\smile\;:H^n_{\MT}(\FF,\FF_U,\Gamma)\times H^m_{\MT}(\FF,\FF_V,\Gamma)\to
H^{n+m}_{\MT}(\FF,U + V,\Gamma)\;.
$$
Now, observe that $H^*_{\MT}(\FF,U+V,\Gamma)$ is isomorphic to
$H^*_{\MT}(\FF,\FF_{U\cup V},\Gamma)$.
\end{proof}

Our goal now is to adapt the long exact sequence of a triple in
order to approach some kind of excision result.

A {\em measurable triple\/} $(X,A,B)$ is a collection of three
MT-spaces such that $A$ is MT-subspace of $X$ and $B$ is
MT-subspace of $A$. Of course, we have
the following short exact sequence of complexes:
$$
0\to C^*_{\MT}(X,A,\Gamma)\to C^*_{\MT}(X,B,\Gamma)\to
C^*_{\MT}(A,B,\Gamma)\to 0\;.
$$
The surjectivity of the map to $C^*_{\text{\rm MT}}(A,B,\Gamma)$ can be proved as follows: giving $\omega\in
C^*_{\MT}(A,B,\Gamma)$, let $\widetilde{\omega}$ be its extension assigning $0$ to simplices that are not contained in $A$; it is clear that this
extension is measurable. Therefore we obtain a short exact
sequence in cohomology:
$$
\cdots\to H^{n}_{\MT}(X,A,\Gamma)\to H^n_{\MT}(X,B,\Gamma)\to
H^n_{\MT}(A,B,\Gamma)\to H^{n+1}_{\MT}(X,A,\Gamma)\to\cdots\;
$$

For the excision statement we refine the conditions on the triple
$(X,A,B)$. Now we require that $\overline{B}$ and $\operatorname{\intr}(A)$ are
measurable sets, and $\overline{B}$ is MT-subspace of $\operatorname{\intr}{A}$.

\begin{rem}
By a result due to Kallman \cite{Kallman}, if a measurable set meets each leaf of a measurable
lamination in a $\sg$-compact set, then its closure and interior are measurable. Under these
conditions the hypothesis on the triple may be reduced to the
usual $A,B\subset X$ and $\overline{B}\subset\intr(A)$.
\end{rem}

\begin{thm}[Excision for measurable laminations]
Let $\UU=\{U,V\}$ be measurable open covering of a measurable
lamination $\FF$. Then $H^*_{\MT}(\FF,U,\Gamma)\cong
H^*_{\MT}(V,U\cap V,\Gamma)$ via the inclusion map $i:(V,U\cap
V)\to (\FF,U)$. Equivalently, if $Z\subset U$ is measurable and
closed, then $H^*_{\MT}(\FF,U,\Gamma)\cong H^*_{\MT}(\FF\setminus
Z, U\setminus Z,\Gamma)$ via the inclusion.
\end{thm}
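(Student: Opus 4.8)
The plan is to follow the classical excision argument via barycentric subdivision, using the machinery developed above as the measurable replacement for the ordinary subdivision operator. First I would set $\UU=\{U,V\}$ and recall from Corollary~\ref{cor:invariancesubdivision} (and the remarks following it) that $H^*_{\MT}(\FF,\Gamma)\cong H^*_{\UU,\MT}(\FF,\Gamma)$, and more to the point that $H^*_{\MT}(\FF,\FF_{U\cup V},\Gamma)\cong H^*_{\MT}(\FF,U+V,\Gamma)$ via restriction; since here $U\cup V=\FF$, this gives $H^*_{\MT}(\FF,U,\Gamma)\cong H^*_{\MT}(U+V,U,\Gamma)$, where $C^*_{\MT}(U+V,U,\Gamma)$ denotes the subcomplex of $C^*_{\MT}(U+V)$ vanishing on simplices lying in $U$. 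So it suffices to prove $H^*_{\MT}(U+V,U,\Gamma)\cong H^*_{\MT}(V,U\cap V,\Gamma)$ via the inclusion $i\colon(V,U\cap V)\to(U+V,U)$.

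Next I would exhibit the inverse explicitly at the cochain level. The inclusion $C^*_{\MT}(V,U\cap V,\Gamma)\hookleftarrow$ — wait, rather: a cochain in $C^*_{\MT}(U+V,U,\Gamma)$ is determined by its values on simplices contained in $V$ but not in $U$, since it vanishes on those in $U$ and, by definition of $C^*_{\MT}(U+V)$, there are no other simplices to evaluate. Restriction to simplices in $V$ therefore gives a cochain map $i^\sharp\colon C^*_{\MT}(U+V,U,\Gamma)\to C^*_{\MT}(V,U\cap V,\Gamma)$, and this map is in fact a bijection of cochain groups: given $\omega\in C^*_{\MT}(V,U\cap V,\Gamma)$, extend it by $0$ on simplices lying in $U$; measurability of the extension is checked exactly as in the surjectivity argument for the short exact sequence of a triple above, and the extension manifestly vanishes on $U$-simplices. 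Since $\delta$ is defined by the same formula on both sides and is compatible with restriction, $i^\sharp$ is an isomorphism of cochain complexes, hence of cohomology; composing with the isomorphism of the previous paragraph yields the theorem. The equivalent formulation with $Z=\FF\setminus V$ closed and measurable in $U$ (so $U\cup(\FF\setminus Z)=\FF$, $U\cap(\FF\setminus Z)=U\setminus Z$, $V=\FF\setminus Z$) is then immediate.

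The one genuine point requiring care — and the main obstacle — is the identification $H^*_{\MT}(\FF,\FF_{U\cup V},\Gamma)\cong H^*_{\MT}(\FF,U+V,\Gamma)$ in the measurable category, i.e. that passing to cochains adapted to the covering $\{U,V\}$ does not change cohomology. In the classical case this is the subdivision operator and the chain homotopy $\id\simeq S$; here one must instead invoke Proposition~\ref{p:measurableadaptedsubdivision} and Corollary~\ref{cor:invariancesubdivision} to get the isomorphism for $\FF$ itself and for the pair, and then feed it through the $5$-Lemma applied to the map of long exact sequences of the pairs $(\FF,\FF_{U\cup V})$ and $(\FF,U+V)$ — precisely the step already recorded in the text immediately before the corollary on relative cup products. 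Once that input is granted, everything else is formal: the two cochain complexes $C^*_{\MT}(U+V,U,\Gamma)$ and $C^*_{\MT}(V,U\cap V,\Gamma)$ are literally isomorphic, with the measurability of the zero-extension being the only thing to verify, and that is routine. I would be careful to state explicitly that the remark on Kallman's theorem is what licenses replacing ``$\overline{Z}$ and $\intr(U)$ measurable'' by the bare hypothesis that $Z$ is closed and measurable, when $Z$ meets each leaf in a $\sg$-compact set.
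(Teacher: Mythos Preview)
Your proposal is correct and follows essentially the same route as the paper: reduce $H^*_{\MT}(\FF,U,\Gamma)$ to $H^*_{\MT}(U+V,U,\Gamma)$ via the subdivision machinery of Proposition~\ref{p:measurableadaptedsubdivision}/Corollary~\ref{cor:invariancesubdivision} plus the $5$-Lemma, and then observe that $C^*_{\MT}(U+V,U,\Gamma)\to C^*_{\MT}(V,U\cap V,\Gamma)$ is already an isomorphism of cochain complexes (via extension by zero). The only slip is in your first paragraph, where invoking the identity $H^*_{\MT}(\FF,\FF_{U\cup V})\cong H^*_{\MT}(\FF,U+V)$ with $U\cup V=\FF$ yields $0\cong 0$ rather than the claimed $H^*_{\MT}(\FF,U)\cong H^*_{\MT}(U+V,U)$; the correct justification is the $5$-Lemma argument you spell out in your third paragraph, applied to the pairs $(\FF,U)$ and $(U+V,U)$.
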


The equivalence between the two statements is well known
\cite{Hatcher}.

\begin{proof}
We had seen that the inclusion map $C^*_{\MT}(X,U,\Gamma)\to
C^*_{\MT}(U + V,U,\Gamma)$ induces an isomorphism on measurable
cohomology. On the other hand, it is easy to check that the inclusion map $j:C^*_{\MT}(U +
V,U,\Gamma)\to C^*_{\MT}(V,U\cap V)$ also induces an isomorphism in
measurable cohomology; in fact, $j$ is an isomorphism of cochain complexes.
\end{proof}

\begin{rem}
It is easy to check that $H^n_{\MT}(T,\Gamma)=0$ for $n\geq 1$ and
any transversal $T$ of a measurable lamination. Hence,
by exactness, $H^n_{\MT}(\FF,T,\Gamma)\cong H^n_{\MT}(\FF,\Gamma)$
for $n\geq 2$. On the other hand, $H^0_{\MT}(\FF,\Gamma)$ is the group of measurable maps $f:\FF\to\Gamma$ constant on leaves for any MT-space $\FF$.
\end{rem}

\begin{rem}
Observe that the exactness of the Mayer-Vietoris sequence holds for
measurable cohomology. Let $U,V$ be measurable open sets covering
a measurable lamination. The following short sequence is
exact:
$$
0\to C^n_{\MT}(U+V,\Gamma)\to C^n_{\MT}(U,\Gamma)\oplus
C^n_{\MT}(V,\Gamma)\to C^n_{\MT}(U\cap V,\Gamma)\to 0\;.
$$
The usual proof of the Mayer-Vietoris principle can be adapted by checking that measurability is always
preserved.
\end{rem}

Now, we show another description of the singular
measurable cohomology for measurable laminations. It
will become important to define singular
$L^2$~-~cohomology later.

\begin{defn}
Let $T$ be a complete transversal of \FF. An {\em elementary
measurable singular n-simplex\/} relative to $T$ on \FF\ is an
MT-map $\sg:\bigtriangleup^n\times T_\sg \to \FF$ such that
$T_\sg\subset T$ is a Borel subset and
$\sg(\bigtriangleup^n\times\{t\})\subset L_t$ for all $t\in
T_\sg$. Observe that this definition implies that $\sg^{-1}(L)$ is
a countable union of fibers of the measurable simplex for each
leaf $L\in\FF$. The set of elementary $n$-simplices is denoted by
$EC_n(\FF)$.
\end{defn}

Let $\Meas(T,\Gamma)$ be the group of measurable maps
$T\to\Gamma$. An {\em elementary $n$-cochain\/} over the coefficient
ring $\Gamma$ is a map $\omega:EC_n(\FF)\to \Meas(T,\Gamma)$ such
that $\omega(\sg)$ is supported in $T_\sg$ for all $\sg\in
EC_n(\FF)$, $\omega(\sg_{|\bigtriangleup\times
S})=\omega(\sg)\cdot\chi_{S}$ for any measurable $S\subset T_\sg$
and $\omega(\sg(\id\times h))=\omega(\sg)\circ h$ for all
measurable holonomy map $h:A\to B$ between measurable subsets
$A,B\subset T$. The set of elementary $n$-cochains will be denoted
by $EC^n(\FF,\Gamma)$, and it is endowed with a group structure
induced by $\Gamma$.

The coboundary morphism $\delta:EC^{n-1}(\FF,\Gamma)\to
EC^n(\FF,\Gamma)$ is defined by
$$
\delta\omega(\sg)=\sum_{i=0}^{n}I_i\,\omega(\tau_i)\;,
$$
where $\tau_i$ denote the restriction of $\sg$ to each measurable
$(n-1)$-simplex of the boundary and the $I_i$ denote the
orientation factors ($1$ if the orientation of the face agree with the orientation induced by the simplex or $-1$ otherwise). This is well defined since operations in
$\Gamma$ are measurable. Like in the classical setting,
$\delta^2=0$ and we have a cochain complex. The {\em measurable
countable-to-one cohomology\/} groups are defined as usual by
$H^n(\FF,\Gamma)=\ker \delta_n/\Image \delta_{n-1}$.

Let $f:\FF\to\GG$ be an MT-map such that $f^{-1}(L)$ is a countable
union of leaves of $\FF$ for all $L\in\GG$; such a map is said to be {\em countable-to-one\/}. By using Lusin's lemma,
there exist complete transversals, $T$ of $\FF$ and $T'$ of $\GG$, such that $f(T)\subset T'$ and $f:T\to T'$ is
injective. Therefore, measurable countable-to-one cohomology is
functorial with respect to countable-to-one maps and complete transversals satisfying
the above conditions.

\begin{prop}\label{p:elementaryinvariancetransversals}
Elementary cochain complexes relative to different complete
transversals are isomorphic. Hence the cohomology groups are
independent of the choice of the complete transversal.
\end{prop}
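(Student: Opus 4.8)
The plan is to fix two complete transversals $T$ and $T'$ of $\FF$ and construct a cochain isomorphism $EC^*_{T}(\FF,\Gamma)\to EC^*_{T'}(\FF,\Gamma)$ commuting with $\delta$. First I would reduce to a convenient intermediate situation: since both $T$ and $T'$ are complete and measurable, consider $T''=T\cup T'$, which is again a transversal (the family of transversals is closed under countable, in particular finite, unions by the discussion after the definition of $\TT(X)$) and is complete. It therefore suffices to produce the isomorphism in the special case where one transversal is contained in the other, say $T\subset T'$, and then compose the two resulting isomorphisms $EC^*_{T}\to EC^*_{T''}$ and $EC^*_{T'}\to EC^*_{T''}$ (the second inverted) to get $EC^*_{T}\cong EC^*_{T'}$.

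So assume $T\subset T'$. In one direction the map is essentially restriction: given an elementary cochain $\omega$ relative to $T'$ and an elementary simplex $\sg:\bigtriangleup^n\times T_\sg\to\FF$ with $T_\sg\subset T$, I set $r(\omega)(\sg)=\omega(\sg)$, noting that $\sg$ is also an elementary simplex relative to $T'$ because $T_\sg\subset T\subset T'$; the three defining properties of an elementary cochain (support in $T_\sg$, behavior under restriction to measurable subsets, and equivariance under measurable holonomy maps) are inherited immediately, and $r$ commutes with $\delta$ since $\delta$ is defined fiberwise by the same formula on both sides. For the reverse direction, given $\eta$ relative to $T$ and an elementary simplex $\sg:\bigtriangleup^n\times T_\sg\to\FF$ with $T_\sg\subset T'$, I must push the fibers of $\sg$ onto $T$. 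Here I would use Lusin's Lemma (Proposition~\ref{p:Lusin1}) together with completeness of $T$: for each $t\in T_\sg$ the leaf $L_t$ meets $T$, so one can choose measurably, on a countable Borel partition $T_\sg=\bigsqcup_k A_k$, a measurable holonomy map $h_k:A_k\to T$ carrying each fiber's basepoint-leaf into $T$; concretely, the "diagonal-like" assignment $t\mapsto(\text{some point of }L_t\cap T)$ can be realized as a Borel section of the countable-fibered map $\{(t,x)\in T_\sg\times T : x\in L_t\}\to T_\sg$. Reparametrizing $\sg$ by $\id\times h_k$ on each $A_k$ yields elementary simplices relative to $T$, and I define $s(\eta)(\sg)$ by gluing $\eta(\sg\circ(\id\times h_k))\circ h_k^{-1}$ over the partition; the holonomy-equivariance axiom for $\eta$ is exactly what makes this independent of the choices of the $h_k$, hence well defined, and the restriction axiom makes the gluing over the $A_k$ coherent. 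One checks $r\circ s=\id$ and $s\circ r=\id$ directly from these axioms, and $s$ commutes with $\delta$ because restriction of $\sg$ to a boundary face commutes with precomposition by $\id\times h_k$.

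The main obstacle is the measurable-selection step: producing, for an arbitrary elementary simplex $\sg$ over $T'$, a \emph{measurable} family of holonomy maps pushing its fibers into $T$, with the reparametrized maps still being genuine MT-maps (not merely fiberwise continuous). This is where Proposition~\ref{p:Lusin1} does the real work — the set $\{(t,x)\in T_\sg\times T:x\in L_t\}$ is Borel with countable fibers over $T_\sg$ (countability because $T\in\TT(X)$ meets each leaf in a countable set), so it admits a Borel section and a countable Borel partition on which the projection is injective, giving the $h_k$; and one must verify that $\sg\circ(\id\times h_k)$ is measurable and continuous, which follows since $h_k$ is a Borel isomorphism onto its image and $\sg$ is an MT-map. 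Once well-definedness and measurability of $s$ are secured, everything else is the routine verification that $r$ and $s$ are mutually inverse cochain maps, and the independence statement for the cohomology groups follows formally; in particular the resulting isomorphism is compatible with the coboundary, so it descends to $H^n(\FF,\Gamma)$ for every $n$.
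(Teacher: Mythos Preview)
Your proposal is correct and follows essentially the same strategy as the paper: reduce to the case $T\subset T'$, take the restriction map in one direction, and in the other direction push fibers of an elementary simplex over $T'$ into $T$ via measurable holonomy maps obtained from Lusin's lemma. Two small points: first, your formula has the directions reversed --- with $h_k:A_k\to T$ and $\sg$ defined on $\bigtriangleup\times T_\sg\subset\bigtriangleup\times T'$, the reparametrized simplex over $T$ is $\sg\circ(\id\times h_k^{-1}):\bigtriangleup\times h_k(A_k)\to\FF$, and $s(\eta)(\sg)|_{A_k}=\eta(\sg\circ(\id\times h_k^{-1}))\circ h_k$; second, the paper makes the slightly cleaner choice of fixing a single partition of $T'$ with holonomy maps into $T$ once and for all (independent of $\sg$), so that no separate argument for independence of the choice of $h_k$ is needed, whereas you rechoose the partition for each $\sg$ and invoke the holonomy-equivariance axiom to show the result is well defined --- both routes work.
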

\begin{proof}
We use the notation $EC_n(T)$ for the elementary $n$-chains
relative to a complete transversal $T$ and the notation $EC^n(T)$ for the associated elementary $n$-cochains. Obviously, it is sufficient to prove that
$EC^*(T)$ is isomorphic to $EC^*(T')$ for $T\subset T'$. The
inclusion map $i_*:EC_*(T)\hookrightarrow EC_*(T')$ induces a cochain map
$i^*:EC^*(T')\to EC^*(T)$. Now, let us define its inverse. Let
$B_1=T\cap T'$ and $C=T'\setminus T$. By Lusin's lemma
(Lemma~\ref{p:Lusin1}), there exists a countable measurable
partition $\{B_2,B_3,\dots\}$ of $C$ and measurable holonomy maps
$h_i:B_i\to h_i(B_i)\subset T$ for $i>1$. For $t\in T$, let $i(t)$
be the unique positive integer such that $t\in B_{i(t)}$. For
$\sg\in EC_n(T')$ and $i\in\N$, define
$\sg_{i}:\bigtriangleup\times h_{i}(B_{i}\cap T'_\sg)\to\FF$ by
$\sg_{i}(x,t)=\sg(x,h_{i}^{-1}(t))$; observe that $\sg_i\in
EC_n(T)$ for all $i\in\N$. Let $\omega\in C^*(T)$, and define
$\rho\omega(\sg)(t)=\omega(\sg_{i(t)})(h_{i(t)}(t))$, which is an
elementary cochain relative to $T'$. Clearly, $\rho:EC^*(T)\to
EC^*(T')$ is inverse of $i^*$.
\end{proof}
\begin{cor}
Let $\FF$ be a one-leaf foliation. Then the standard singular
cohomology groups are isomorphic to the measurable countable-to-one ones.
\end{cor}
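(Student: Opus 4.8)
The plan is to reduce everything to a one-point complete transversal. Since $\FF$ has a single leaf $L$, which coincides with the ambient space, write $X$ for that space, so $X=L$. Then any singleton $\{*\}\subset X$ is a transversal (its intersection with the unique leaf is finite, hence countable) and it is complete (it meets $L$). By Proposition~\ref{p:elementaryinvariancetransversals} the measurable countable-to-one cohomology $H^*(\FF,\Gamma)$ may be computed from the elementary cochain complex $EC^*(\FF,\Gamma)$ relative to $T=\{*\}$, so it suffices to exhibit an isomorphism of cochain complexes between $EC^*(\FF,\Gamma)$ relative to $\{*\}$ and the usual singular cochain complex $C^*(X;\Gamma)$.

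First I would unwind the definitions with $T=\{*\}$. A Borel subset $T_\sg\subset\{*\}$ is either empty or all of $\{*\}$. If $T_\sg=\{*\}$, then an MT-map $\sg\colon\bigtriangleup^n\times\{*\}\to\FF$ is simply a continuous map $\bigtriangleup^n\to X$: measurability is automatic, because in any chart the transverse coordinate of a continuous map is locally constant (the transverse topology being discrete), so a continuous map of $\bigtriangleup^n$ into a measurable lamination is always an MT-map; and the requirement $\sg(\bigtriangleup^n\times\{t\})\subset L_t$ is vacuous since there is only one leaf. Thus, apart from the degenerate simplices with $T_\sg=\emptyset$, the elements of $EC_n(\FF)$ relative to $\{*\}$ are exactly the usual singular $n$-simplices of $X$. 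Since $\Meas(\{*\},\Gamma)=\Gamma$, an elementary $n$-cochain then amounts to an assignment of an element of $\Gamma$ to each singular $n$-simplex (and $0$ to the degenerate ones, as forced by the support condition); the restriction condition $\omega(\sg_{|\bigtriangleup\times S})=\omega(\sg)\cdot\chi_{S}$ ranges only over $S\in\{\emptyset,\{*\}\}$ and so holds automatically, while the holonomy condition is empty because the only measurable holonomy transformations between subsets of $\{*\}$ are identity maps.

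This produces a group isomorphism $EC^n(\FF,\Gamma)\cong C^n(X;\Gamma)$ for every $n$, and the elementary coboundary $\delta\omega(\sg)=\sum_i I_i\,\omega(\tau_i)$ transports to the usual singular coboundary under this identification, so the two cochain complexes are isomorphic. Passing to cohomology and combining with Proposition~\ref{p:elementaryinvariancetransversals} gives $H^n(\FF,\Gamma)\cong H^n(X;\Gamma)$ for all $n$, which is the standard singular cohomology of the leaf. There is no genuine obstacle here; the only points that need a word of justification are that a singleton is a legitimate complete transversal, that continuous maps into $X$ are automatically MT-maps, and the bookkeeping for the degenerate simplices with $T_\sg=\emptyset$.
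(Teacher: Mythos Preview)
Your argument is correct and is exactly the filling-in the paper has in mind: the corollary is stated without proof, as an immediate consequence of Proposition~\ref{p:elementaryinvariancetransversals}, and the natural way to cash that in is to take $T=\{*\}$ and observe that $EC^*(\FF,\Gamma)$ then reduces to the ordinary singular cochain complex. Your bookkeeping for $T_\sg=\emptyset$, the support/restriction conditions, and the triviality of holonomy on a singleton is all fine; the only remark is that the justification ``a continuous map of $\bigtriangleup^n$ into a measurable lamination is always an MT-map'' is phrased more generally than needed---for the one-leaf case it suffices (and is cleaner) to note that the leaf topology and the ambient topology coincide, so the $\sigma$-algebra is the Borel $\sigma$-algebra of that topology and continuity immediately gives measurability.
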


\begin{prop}\label{p:equivsingularlaminated}
Suppose that there exists a countable covering
$\UU=\{U_n\}_{n\in\N}$ such that all finite intersections are
measurably contractible, i.e., there exists a measurable
deformation of the inclusion map to a constant map along the leaves.
Then the measurable singular cohomology groups and the measurable
countable-to-one cohomology ones are isomorphic.
\end{prop}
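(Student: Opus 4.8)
The goal is to produce a cochain-level comparison between the measurable singular complex $C^*_{\MT}(\FF,\Gamma)$ and the elementary complex $EC^*(\FF,\Gamma)$ that induces an isomorphism in cohomology. My plan is to mimic the classical proof that singular cohomology computed with a good cover equals \v{C}ech cohomology, working with a double complex whose rows and columns are respectively the nerve-type complex of $\UU$ and the two candidate complexes. First I would fix a complete transversal $T$ adapted to $\UU$ (each $U_n$ may be taken to be a union of plaques meeting $T$ in a Borel set), so that $EC^*(\FF,\Gamma)$ is computed relative to $T$; by Proposition~\ref{p:elementaryinvariancetransversals} this is harmless. The key point is that an elementary simplex $\sg:\bigtriangleup^n\times T_\sg\to\FF$ is precisely a measurable singular simplex all of whose fibers lie in a single leaf, and conversely a measurable singular simplex whose image lies in one $U_n$ can, after the adapted subdivision of Proposition~\ref{p:measurableadaptedsubdivision} and a measurable partition of $T_\sg$ by holonomy (Lusin, Proposition~\ref{p:Lusin1}), be cut into pieces each of which is an elementary simplex. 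This gives, using Corollary~\ref{cor:invariancesubdivision}, a map $C^*_{\UU,\MT}(\FF,\Gamma)\to EC^*(\FF,\Gamma)$ at the cochain level and, going the other way, an inclusion of elementary simplices into $\UU$-small singular simplices once the $U_n$ are small enough (which can be arranged since each leaf is locally contractible and $P$ is locally path connected).

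The heart of the argument is the measurable contractibility hypothesis. For each nonempty finite intersection $U_I=U_{i_0}\cap\dots\cap U_{i_k}$ choose a measurable leafwise deformation retraction $D_I:U_I\times[0,1]\to U_I$ onto a measurable transversal (a ``center'' section). By Proposition~\ref{p:invariancetransversals} the leafwise prism operator $P_{D_I}$ associated to $D_I$ preserves measurability and provides a cochain contraction of $C^*_{\MT}(U_I,\Gamma)$ down to $H^0$, and similarly of $EC^*(U_I,\Gamma)$; in both cases $H^0$ of a measurably contractible open set is just $\Meas(T\cap U_I,\Gamma)$, i.e.\ the measurable functions on the transversal. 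Then I would form the Mayer--Vietoris/\v{C}ech-type double complex $\bigoplus_{|I|=k+1} C^*_{\MT}(U_I,\Gamma)$ (one column per $k$) and its elementary analogue $\bigoplus_{|I|=k+1} EC^*(U_I,\Gamma)$; the exactness of the rows follows from the Mayer--Vietoris remark proved earlier (iterated), and both double complexes have the same ``edge'' complex, namely the \v{C}ech complex $\bigoplus_{|I|=k+1}\Meas(T\cap U_I,\Gamma)$ with the simplicial differential of the nerve of $\UU$. A standard spectral sequence / zig-zag argument then shows that the total cohomology of each double complex is both $H^*_{\MT}(\FF,\Gamma)$ (resp.\ $H^*(\FF,\Gamma)$) and the \v{C}ech cohomology of that edge complex; comparing the two yields the desired isomorphism, and one checks it is induced by the natural cochain map identifying an elementary cochain with the singular cochain vanishing off elementary simplices.

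A slightly cleaner route, which I would probably adopt to avoid spectral-sequence bookkeeping in the measurable category, is to build the comparison directly. Restricting to $\UU$-small (resp.\ elementary) cochains, I would produce explicit chain-homotopy inverses by iterated use of the prism operators $P_{D_I}$ together with a measurable partition of unity on the transversal subordinate to $\{T\cap U_n\}$ (which exists because $T$ is standard and $\UU$ is countable, so one takes $T'_n=(T\cap U_n)\setminus\bigcup_{m<n}(T\cap U_m)$ as in the proof of Proposition~\ref{p:measurableadaptedsubdivision}): given a $\UU$-small singular cochain, decompose each simplex's transversal by this partition, apply the relevant $D_I$ to push it into a leaf through the chosen center, and record the resulting elementary cochain; conversely extend an elementary cochain by zero and check it is $\UU$-small up to subdivision. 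The two composites are chain-homotopic to the identity via the telescoping of the $P_{D_I}$'s, exactly as in the proof of the nerve theorem, and every operation involved is manifestly measurable because each $D_I$, each holonomy map, and the partition $\{T'_n\}$ is. The main obstacle is precisely the verification that these homotopies stay inside the measurable/elementary subcomplexes: one must track that subdividing, pushing along $D_I$, and reparametrising by holonomy never destroys the Borel dependence on the transverse parameter, and that the supports behave correctly under the cochain conditions $\omega(\sg_{|\bigtriangleup\times S})=\omega(\sg)\cdot\chi_S$ and holonomy-invariance. This is exactly the kind of ``it is easy to check that measurability is preserved'' step that appears throughout the paper, so I would state the construction carefully and relegate the measurability checks to a remark, the way the Mayer--Vietoris and homotopy-invariance statements are handled above. \qed
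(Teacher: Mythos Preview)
Your primary route---forming, for each of the two complexes, the \v{C}ech--singular double complex over the good cover $\UU$, using measurable contractibility of the $U_I$ to make one direction acyclic, a \v{C}ech-type contraction for the other, and identifying both edges with the same measurable \v{C}ech complex---is exactly the argument the paper gives. The only cosmetic difference is that the paper writes down the explicit ``first $U_n$ containing the image of $\sigma$'' cochain homotopy for the \v{C}ech direction rather than invoking iterated Mayer--Vietoris, and then appeals to Corollary~\ref{cor:invariancesubdivision} at the end; your alternative ``direct comparison'' route is not needed.
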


\begin{proof}
We shall show that this two cohomology groups are isomorphic to a
certain notion of measurable \v{C}eck cohomology groups
related to a nice covering of the measurable lamination,
and therefore, they are isomorphic.

The measurable \v{C}eck cohomology with respect to a measurable
open covering with constant coefficients on a measurable group
$\Gamma$, is defined as follows; of course, it is also possible to
define it by using sheaves. Let
$$
C^s=\prod_{(i_0,...,i_s)}F(U_{i_0}\cap\dots\cap U_{i_s},\Gamma)\,,
$$
where $F(U,\Gamma)$ is the set of measurable maps $f:U\to\Gamma$
constant on the leaves of $\FF_U$. The coboundary map is defined
like in the usual \v{C}eck complex, and the
corresponding cohomology groups are the measurable \v{C}eck cohomology
groups induced by the covering.

Let $S^r(U,\Gamma)$  (respectively, $S^{\prime r}(U,\Gamma)$) denote the set of measurable singular cochains (respectively, countable-to-one)
relative to $U$ with coefficients in $\Gamma$. Consider the following two
double complexes
\begin{align*}
C^{r,s}=\prod_{(i_0,\cdots,i_s)}C^r_{\MT}(U_{i_0}\cap\dots\cap
U_{i_s},\Gamma)\;,\\
C'^{r,s}=\prod_{(i_0,\cdots,i_s)}EC^r(U_{i_0}\cap\dots\cap
U_{i_s},\Gamma)\;,\\
\end{align*}
Observe that $C^{*,-1}=C^*_{\UU,\MT}(\FF,\Gamma)$, and the
coboundary map on $C^{*,n}$ is the usual measurable coboundary
map. In the same way, $C'^{*,-1}=EC^*_{\UU}(\FF,\Gamma)$. For the
vertical rows, the coboundary map is defined like in the
\v{C}eck complex. Of course, $C^{-1,*}=\ker(C^{0,s}\to C^{1,s})$
and $C'^{-1,s}=\ker(EC^{0,s}\to EC^{1,s})$. An easy computation
shows that $C^{-1,*}$ and $C'^{-1,*}$ are the same complex (the
measurable \v{C}eck complex relative to $\UU$).

For $i,j>-1$, the columns and rows of $C$ and $C'$ have trivial cohomology since the rows
are given by measurably contractible spaces and, for the columns, we
have the following obvious cochain homotopy to zero. Suppose that $\UU$ is well
ordered and, for $\sg:\bigtriangleup\to U_{i_0}\cap\dots\cap
U_{i_s}$, define $h\sg=\sg:\bigtriangleup\to U_{i_\sg}\cap
U_{i_0}\cap\dots\cap U_{i_s}$, where $U_{i_\sg}$ is the first
element of $\UU$ containing the image of $\sg$. This definition
also works for degree $-1$, where we consider singular simplices
$\sg:\bigtriangleup\to \FF$ whose image is contained in some
element of $\UU$. The cochain homotopy $h:C^{r,s}\to
C^{r,s-1}$ is defined by $h\omega(\sg)=\omega(h\sg)$. The case of
$C^{\prime r,s}$ is completely analogous.

By a standard argument, the cohomology groups of the first row and
first column are isomorphic. Finally the result follows by
measurable excision.
\end{proof}

\begin{rem}
The hypothesis is not very restrictive and includes many of the
interesting examples. For instance, it holds for
measurable suspensions with a good base, measurable simplicial
spaces (where the leaves are simplicial complexes) or usual
topological foliations.
\end{rem}

\section{Simplicial, $L^r$ and differentiable measurable cohomology}

A measurable lamination may have a measurable simplicial structure. Roughly speaking, it is a simplicial structure on the leaves that varies in a measurable way.
It is natural to adapt to these special cases the concept of
simplicial and cellular cohomology. Also we introduce the $L^2$
measurable cohomology when there exists a transverse invariant
measure and the differentiable measurable cohomology for
differentiable measurable laminations. Original definitions are
given in \cite{Bermudez}.

\begin{defn}[Measurable triangulation
\cite{Bermudez}]\label{d:measurabletriangulation} A measurable
triangulation for a measurable lamination is a measurable
family of triangulations $\{\TT_L\}_{L\in\FF}$. Here,
measurability means that the set of their $n$-simplices are
embedded MT-spaces. The image of barycenters of $n$-simplices,
denoted by $\BB^n$, is a transversal. The function
$\sg^n:\bigtriangleup^n\times\BB^n\to X$, mapping a barycenter to
the embedding $\sg^n_p:\bigtriangleup^n\to L_p$ given by
$\TT_{L_p}$, must be measurable, where $\bigtriangleup^n$ is the
canonical $n$-simplex. A measurable triangulation is of class
$C^m$ if the functions $\sg^n_p$ are $C^m$.
\end{defn}

Let $\TT$ be a
triangulation. An {\em n-cochain\/} over a measurable ring
$\Gamma$ is a measurable map $\omega:\BB^n\to \Gamma$; of course, we identify the
barycenters of $\BB^n$ with the respective $n$-simplex. We denote by $C^n(\TT,\Gamma)$ the set of
simplicial n-cochains; this set is endowed with a ring structure induced by
$\Gamma$. We define the coboundary operator $\delta:C^n(\TT,\Gamma)\to
C^{n+1}(\TT,\Gamma)$ as usual by $\delta\omega:\BB^{n+1}\to
\Gamma$,
$$
\delta\omega(b)=\sum_{\bigtriangleup^n_{p}\subset\partial\bigtriangleup^{n+1}_b}b(p)\,\omega(p)
$$
where $b(p)$ is the orientation of the simplex
$\bigtriangleup^n_p$ induced by $\bigtriangleup^{n+1}_b$.

Clearly, $\delta^2=0$ and we can define the cohomology groups as usual:
$$H^n(\TT,\Gamma)=\Ker \delta_n/\Image \delta_{n-1}\;.$$

\begin{prop}\label{p:equivsimplicialsingular}
Let $(X,\FF)$ be a measurable lamination that admits a
measurable triangulation. Then its measurable singular and simplicial cohomology
groups are isomorphic.
\end{prop}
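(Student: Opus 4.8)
The plan is to mimic the classical proof that simplicial and singular cohomology agree, but to carry out every step measurably in the transverse direction, using the measurable triangulation $\sg^n:\bigtriangleup^n\times\BB^n\to X$ to produce, in a single stroke, an entire transverse family of simplicial simplices. First I would build a natural cochain map $\Phi: C^*(\TT,\Gamma)\to C^*_{\MT}(X,\Gamma)$. Given a simplicial cochain $\omega:\BB^n\to\Gamma$ and a measurable singular simplex $\tau:\bigtriangleup^n\times T\to X$, the subdivision machinery of the previous section is the key input: by Proposition~\ref{p:measurableadaptedsubdivision} applied to the covering of $\bigtriangleup^n\times T$ by preimages of open stars of the triangulation (fibrewise), there is a measurable partition $\{T'_j\}$ of $T$ on which $\tau$ restricts to a simplex that fibrewise lands in a single closed simplex of the triangulation, hence (after a further fibrewise-constant subdivision into affine simplices) can be compared with the simplicial structure. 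One then sets $\Phi(\omega)(\tau)(t)$ to be the alternating sum over the subdivision simplices of the fibre $\bigtriangleup^n\times\{t\}$ of $\omega$ evaluated on the corresponding vertices/simplex of $\TT_{L_t}$, read off from $\sg^n$. Measurability of $t\mapsto\Phi(\omega)(\tau)(t)$ follows because the partition $\{T'_j\}$ is measurable, the subdivision is constant on each $T'_j$, the simplicial data is measurable via $\sg^n$ being an MT-map, and the operations in $\Gamma$ are measurable; that $\Phi$ is a cochain map ($\delta\Phi=\Phi\delta$) is the usual boundary bookkeeping with the orientation factors $b(p)$, done fibrewise.

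Second I would construct the comparison in the other direction, or more efficiently, apply an acyclic-models / open-cover argument. The cleanest route is to reuse Proposition~\ref{p:equivsingularlaminated}: take the covering $\UU$ of $X$ by open measurable thickenings of the open stars $\Star(b)$ of the triangulation (suitably defined so that each $\Star(b)$ meets the leaf $L_t$ in the genuine star of $b$ in $\TT_{L_t}$). Because stars of a simplicial complex and their finite intersections are contractible, and the triangulation is measurable, these thickenings and their finite intersections are measurably contractible, so the hypothesis of Proposition~\ref{p:equivsingularlaminated} is met: $H^*_{\MT}(X,\Gamma)\cong H^*_{\UU,\MT}(X,\Gamma)$. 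On the simplicial side, the nerve of this star covering is precisely the triangulation, and the measurable \v{C}ech complex of $\UU$ with coefficients in the sheaf of leafwise-locally-constant $\Gamma$-valued maps computes both the simplicial cohomology $H^*(\TT,\Gamma)$ (standard nerve argument, made measurable using $\sg^n$) and, by the double-complex argument already run in Proposition~\ref{p:equivsingularlaminated}, $H^*_{\UU,\MT}(X,\Gamma)$. Chasing these identifications, and checking that the composite agrees up to chain homotopy with $\Phi$, gives the isomorphism.

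Third, the remaining verifications are the routine measurable translations of classical facts: that barycentric subdivision induces the identity on measurable cohomology (already essentially Corollary~\ref{cor:invariancesubdivision} and its proof, with the explicit inverse $\rho^*$), that the star covering really has measurably contractible finite intersections (here one uses the straight-line homotopy inside each simplex, which varies measurably because $\sg^n$ is an MT-map and, in the $C^m$ case, is $C^m$), and that $\Phi$ carries a simplicial cocycle to a measurable singular cocycle and simplicial coboundaries to singular coboundaries. I expect the main obstacle to be the measurability of $\Phi$ itself — specifically, the need to organize the fibrewise subdivisions of an arbitrary measurable singular simplex into a genuinely measurable transverse partition and to check that, on each piece, the bookkeeping assigning to a subdivision simplex its image simplex in $\TT_{L_t}$ is a measurable function of $t$. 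This is exactly where Proposition~\ref{p:measurableadaptedsubdivision} (giving the measurable partition $\{T'_j\}$ with fibrewise-constant subdivision) and Lemma~\ref{l:cocycle} (controlling how plaques and transversals match up) do the heavy lifting; once those are in hand, everything else is the classical argument carried fibrewise.
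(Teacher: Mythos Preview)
Your ``cleanest route'' via Proposition~\ref{p:equivsingularlaminated} and the star covering is exactly the paper's approach, but you skip the step that occupies almost all of the paper's proof: producing a \emph{countable} measurable open covering from the stars. The set $\BB^0$ of $0$-simplices is a transversal, typically uncountable, so the family $\{\Star(b)\}_{b\in\BB^0}$ is not a countable covering and cannot be fed into Proposition~\ref{p:equivsingularlaminated} as is; your sentence ``the nerve of this star covering is precisely the triangulation'' shows you are implicitly taking one open set per vertex. What the paper actually does is construct a countable measurable partition $\{S_n\}_{n\in\N}$ of $\BB^0$ and set $U_n=\bigcup_{y\in S_n}\Star(y)$; for the finite intersections $U_{n_0}\cap\cdots\cap U_{n_s}$ to be measurably contractible (so that the \v{C}ech argument goes through and the nerve recovers the simplicial complex), one must arrange that no two points of a single $S_n$ are joined by a $1$-simplex. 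Establishing such a partition is nontrivial: the paper uses Lusin's lemma on the edge map $\sg^1:\{0,1\}\times\BB^1\to\BB^0$ to cut $\BB^0$ into countably many pieces on which only finitely many ``edge holonomies'' $h_{i_1},\dots,h_{i_m}$ act, and then a further compactness/refinement argument to separate each piece from its images under these holonomies.

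Your proposal contains nothing addressing this; the phrase ``suitably defined so that each $\Star(b)$ meets the leaf $L_t$ in the genuine star'' does not touch the countability issue, and neither Proposition~\ref{p:measurableadaptedsubdivision} nor Lemma~\ref{l:cocycle} (which you cite as doing the heavy lifting) is relevant here. Your first route, building an explicit cochain map $\Phi$ via measurable subdivision and fibrewise simplicial approximation, is a different strategy the paper does not pursue; it is not obviously wrong, but as written it is too sketchy to assess --- in particular, the assignment ``$\omega$ evaluated on the corresponding vertices/simplex of $\TT_{L_t}$'' needs a measurable simplicial-approximation construction you have not supplied, and showing $\Phi$ induces an isomorphism would still require something like the countable-covering argument above.
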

\begin{proof}
The standard argument used to prove that measurable singular
cohomology is isomorphic to the measurable countable-to-one cohomology
(Proposition~\ref{p:equivsingularlaminated}) can be adapted easily
in order to obtain this result. We only need to check the
existence of a nice covering by measurable open sets such that all
finite intersections of them are also contractible. The covering
will be given by using the star open sets of the $0$-simplices. We claim that there exists a measurable countable partition, $\TT=\{S_n\}_{n\in\N}$, of $\BB^0$ such that the measurable open sets $$U_n=\{\,x\in\FF\ |\ \exists y\in S_n,\, x\in\Star(y)\,\}$$
 form a nice covering satisfying the above conditions, where $\Star(y)$ denotes the open star set around the $0$-simplex $y$; {\em i.e.\/}, the union of the interiors of all simplices containing $y$. Equivalently, we shall prove that any pair of points of each $S_n$ are not connected by any $1$-simplex.

Let $\sg^1:[0,1]\times\BB^1\to\FF$ be the $1$-simplicial structure of our measurable triangulation. In the measurable prism $[0,1]\times \BB^1$, we have a canonical simplicial structure where the set of $0$-simplices is $C^0=\{0,1\}\times \BB^1$ and the restriction map $\sg^1:C^0\to\BB^0$ has countable fibers. By Lusin's lemma (Proposition~\ref{p:Lusin1}), there exists a measurable partition $\{T_n\}_{n\in\N}$ of $C^0$ such that $\sg^1:T_n\to \BB^0$ is a measurable injection for all $n\in\N$. Each $T_n$ induces a measurable holonomy map: endow $\{0,1\}$ with the group structure of $\Z_2$ and define $h_n:\sg^1(T_n)\to \BB^0$ by $h_n(\sg^1(x,t))=\sg^1(x+1,t)$.

Let $\BB^0_m$ be the measurable set of $0$-simplices that meet exactly $m$ edges of $\BB^1$. This family gives a countable partition of $\BB^0$. Each non-empty intersection $K^m_{i_1,\dots,i_m}=\sg^1(T_{i_1})\cap\dots \sg^1(T_{i_m})\cap \BB^0_m$ defines a domain where only the holonomy maps $h_{i_1},...,h_{i_m}$ are defined. Observe that the sets $K^m_{i_1,\dots,i_m}$, $m\in\N$, form a measurable countable partition of $\BB^0$. If $K^m_{i_1,\dots,i_m}\cap h_{i_j}(K^m_{i_1,\dots,i_m})=\emptyset$ for $1\leq j\leq m$, then this transversal satisfies our conditions, otherwise we claim that there exists a measurable countable (in fact finite) partition of $K^m_{i_1,\dots,i_m}$ where this is true in each element of the partition. Endow $T_m=\BB^0_m\cap U$ with a Polish topology isomorphic to $[0,1]$ (the other cases are trivial). For each $x\in K^m_{i_1,\dots,i_m}$, consider a family of open neighborhoods $V^x_{n}$ such that $\bigcap_{n} V^x_n=\{x\}$. Hence for $n$ large enough, $V^x_n\cap h_{i_j}(V^x_n)=\emptyset$ for $1\leq j\leq m$; otherwise there exists an edge connecting $x$ with itself, which contradicts the definition of triangulation. Notice that $h_{i_j}(V^x_n)$ is not open in general, but it is measurable. Now, by compactness, we can choose neighborhoods $V_1,...,V_N$ covering $K^m_{i_1,\dots,i_m}$ such that any pair of points on each one of them are not connected by any $1$-simplex. Then the desired partition of $K_{i_1,\dots,i_m}^m$ is inductively defined by $B^{i_1,\dots,i_m}_1=V_1$ and $B^{i_1,\dots,i_m}_k=V_k\setminus(\bigcup_{i=1}^{k-1} V_i)$.

Finally, the partition $\TT$ is given by all the previous partitions $\{B^{i_1,\dots,i_m}_k\}^{m\in\N}_{k\in\N}$.
\end{proof}
\begin{cor}
The measurable simplical cohomology does not depend on the measurable triangulation.
\end{cor}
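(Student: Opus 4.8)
The plan is to deduce the corollary immediately from Proposition~\ref{p:equivsimplicialsingular}, whose whole point is that the measurable simplicial cohomology attached to a measurable triangulation agrees with the measurable \emph{singular} cohomology, and the latter is defined without any reference to a triangulation. Concretely, let $\TT_1$ and $\TT_2$ be two measurable triangulations of the measurable lamination $(X,\FF)$. Applying Proposition~\ref{p:equivsimplicialsingular} to each of them yields isomorphisms $H^n(\TT_1,\Gamma)\cong H^n_{\MT}(X,\Gamma)$ and $H^n(\TT_2,\Gamma)\cong H^n_{\MT}(X,\Gamma)$ for every $n$; composing the first with the inverse of the second produces the asserted isomorphism $H^n(\TT_1,\Gamma)\cong H^n(\TT_2,\Gamma)$.

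The one thing to make sure of is that Proposition~\ref{p:equivsimplicialsingular} is applicable unconditionally to \emph{any} measurable triangulation. This is indeed the case: its proof does not assume a nice covering as a hypothesis but \emph{constructs} one out of the given triangulation, namely the covering by the measurable open stars $U_n$ of a suitable measurable partition $\TT=\{S_n\}$ of $\BB^0$, and the $1$-simplex combinatorial argument there shows that all finite intersections of the $U_n$ are measurably contractible along the leaves regardless of which triangulation one started from. Hence the comparison isomorphism exists for each $\TT_i$, and the composite above is legitimate. Moreover, since the maps relating simplicial, singular and measurable \v{C}ech cohomology in that proof are all multiplicative (subdivision of a simplicial cochain into singular ones respects cup products), the identification $H^*(\TT_1,\Gamma)\cong H^*(\TT_2,\Gamma)$ is an isomorphism of graded rings, not merely of graded abelian groups.

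I would close by noting that, although obtained indirectly, this isomorphism is canonical in the relevant sense: it depends only on the two triangulations, because $H^*_{\MT}(X,\Gamma)$ is an intrinsic invariant of the MT-space and the comparison maps of Proposition~\ref{p:equivsimplicialsingular} are the natural ones. There is essentially no mathematical obstacle here; the only work is the bookkeeping check just described, that the proof of Proposition~\ref{p:equivsimplicialsingular} nowhere relies on a distinguished triangulation. (One could alternatively argue via a common measurable subdivision of $\TT_1$ and $\TT_2$ and a measurable subdivision-invariance statement, but the route through singular cohomology is shorter and already available.)
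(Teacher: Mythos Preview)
Your argument is correct and is precisely the intended one: the paper states the corollary with no proof, treating it as an immediate consequence of Proposition~\ref{p:equivsimplicialsingular}, since the measurable singular cohomology is defined independently of any triangulation. Your additional remarks on ring structure and canonicity go beyond what the paper asserts (it only claims independence as groups), so if you keep them you should be prepared to justify multiplicativity separately; otherwise the core deduction matches the paper exactly.
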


In the setting of measurable simplicial cohomology we can restrict
to a fixed complete transversal parametrizing the measurable
simplices. Suppose that there exists a transverse invariant
measure $\LB$ and $\Gamma=\R$ or $\C$ (or a measurable subgroup of
them). Then, we can work with $L^n$-measurable cochains, which are
equivalence classes of $L^n$-maps $f:\BB^n\to \Gamma$ that are $l^n$ on
each leaf, with the equivalence relation defined by being \LB-almost everywhere equal.
The induced cohomology groups are the $L^n$-measurable cohomology
groups associated to the measurable triangulation. For a good
definition of the coboundary map, assume that the measurable
triangulation is regular; {\em i.e.\/},  there is a uniform bound for the number of simplices
shearing any face. The invariance
of the measure implies that these cohomology groups do not depend
on the measurable triangulation. These $L^n$ cohomology groups are not isomorphic
in general to the measurable cohomology groups, but they
give a simplification of the description of the measurable
cohomology groups and help in the search of nonzero
cocycles. In the $L^2$-case, we can give a structure of Hilbert
space to the $L^2_\LB$-measurable cochains, and define the
$\LB$-Betti numbers like the Murray-von Neumann dimension of the
space of harmonic cochains \cite{Connes,Bermudez}, which is
isomorphic to the reduced $L^2$-cohomology, defined as the
quotient of the kernel of each $\delta_n$ over the closure of the image
of $\delta_{n-1}$. For measurable laminations with differentiable
structure on the leaves, we can define the differentiable
measurable cohomology groups \cite{Bermudez,Heitsch-Lazarov}. Of
course, measurable differentiable classes are measurable sections of $\bigwedge T\mathcal{F}^*$ smooth on the leaves, and the coboundary operator is the exterior derivative on
the leaves; the measurability is preserved by the exterior derivative since partial derivatives are computed by a limit of measurable maps. These cohomologies are
related to the leafwise cohomology.

Finally, observe that it is also possible to define a version of the cellular cohomology if we consider a CW-structure on the leaves varying measurably (similarly to a measurable triangulation).

\section{Homotopy invariance of the $L^2$-cohomology}
One of our motivations to define the measurable singular
cohomology is its homotopy invariance, which is a simple way to
prove that other isomorphic cohomologies are homotopy invariants.
The $L^2$-cohomology groups are defined with respect to a measurable
triangulation. Hence, their homotopy invariance is a good
problem. In \cite{Heitsch-Lazarov}, the proof of this fact is based on the
notion of simplicial approximation, without introducing the
concept of singular $L^2$-cohomology. We solve it by defining a
singular version of these groups where the homotopy invariance is easy to
prove, and then we show that usual $L^2$-cohomology and singular
$L^2$-cohomology are isomorphic. Of course, we need some
conditions on the ambient space of the lamination. We assume the
existence of a finite regular foliated atlas such that the transversal
associated to each chart has finite $\LB$-measure,
where $\LB$ is the transverse invariant measure. Also, it is
supposed that there is a riemannian metric on the leaves
that varies measurably on the ambient space.
\begin{defn}
Let $T$ be a complete transversal to $\FF$ with finite
$\LB$-measure. A {\em singular $L^2$-cochain\/} is the equivalence class
of an elementary cochain $\omega$ such that $\int_{T_{\sg}}
|\omega(\sg(-,t))|^2 d\LB(t)<\infty$ for all elementary simplex
$\sg:\bigtriangleup^n\times T_\sg\to\FF$ relative to $T$. The equivalence relation
is given by the ``$\LB$-almost everywhere equal'' equivalence relation.
\end{defn}

\begin{rem}
Notice that $L^2$-cochains do not form a Hilbert space. There is no obvious direct way to endow this space with
a scalar product.
\end{rem}

Of course, the operator $\delta$ preserves singular $L^2$-cochains
and we can define the singular $L^2$-cohomology groups $L^2_{\LB}
H^n_{\MT}(\FF,\Gamma)$. Let $f:(\FF,\LB)\to (\GG,\Delta)$ be a
countable-to-one MT-map such that $\LB(T),\Delta(T')<\infty$ for
complete transversals $T$ and $T'$ so that $f:T\to T'$ is injective. This kind of map is called {\em comparable \/} and induces a homomorphism in singular
$L^2$-cohomology, $f^*:L^2_\Delta H^n_{\MT}(\GG,\Gamma)\to L^2_\LB
H^n_{\MT}(\FF,\Gamma)$.

\begin{prop}
The singular $L^2$-cohomology groups are independent of the choice of
the complete transversal.
\end{prop}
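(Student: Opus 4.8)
The plan is to imitate the proof of Proposition~\ref{p:elementaryinvariancetransversals}, checking at each step that the $L^2$-condition is preserved. As there, it suffices to treat the case $T\subset T'$ of two complete transversals. The inclusion $i_*:EC_*(T)\hookrightarrow EC_*(T')$ restricts elementary simplices and hence induces a cochain map $i^*:L^2_\LB EC^*(T')\to L^2_\LB EC^*(T)$; the only thing to note here is that if $\omega$ is an $L^2$-cochain relative to $T'$, then for an elementary simplex $\sg$ relative to $T$ we have $T_\sg\subset T\subset T'$, so $\int_{T_\sg}|\omega(\sg(-,t))|^2\,d\LB(t)<\infty$ automatically, and $i^*\omega$ is again $L^2$. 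The content is in constructing the inverse $\rho$ and showing it lands in $L^2$-cochains.

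First I would reproduce the set-up from Proposition~\ref{p:elementaryinvariancetransversals}: put $B_1=T\cap T'=T$ and $C=T'\setminus T$, and use Lusin's lemma (Proposition~\ref{p:Lusin1}) to get a countable measurable partition $\{B_2,B_3,\dots\}$ of $C$ together with measurable holonomy maps $h_i:B_i\to h_i(B_i)\subset T$ for $i>1$; set $h_1=\id_T$ and let $i(t)$ be the unique index with $t\in B_{i(t)}$. For $\sg\in EC_n(T')$ define $\sg_i:\bigtriangleup^n\times h_i(B_i\cap T_\sg)\to\FF$ by $\sg_i(x,t)=\sg(x,h_i^{-1}(t))$, which is an elementary simplex relative to $T$, and for an elementary cochain $\omega$ relative to $T$ set $\rho\omega(\sg)(t)=\omega(\sg_{i(t)})(h_{i(t)}(t))$. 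The argument of Proposition~\ref{p:elementaryinvariancetransversals} already shows $\rho\omega\in EC^n(T')$ and that $\rho$ and $i^*$ are mutually inverse cochain maps on elementary cochains; what remains is the $L^2$-estimate.

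For the $L^2$-estimate, fix $\sg\in EC_n(T')$ and, for each $i$, write $T_\sg^i=B_i\cap T_\sg$, so that $\{T_\sg^i\}_i$ is a countable measurable partition of $T_\sg$ and the $h_i$ carry $T_\sg^i$ bijectively onto $h_i(T_\sg^i)=(\sg_i)_{\text{dom}}\subset T$, with the $h_i(T_\sg^i)$ pairwise disjoint (being contained in the disjoint sets obtained from Lusin's lemma applied carefully — if not, refine once more by Lusin as in Prop.~\ref{p:elementaryinvariancetransversals}). Since each $h_i$ is a measurable holonomy transformation and $\LB$ is holonomy-invariant, the change of variables $t\mapsto h_{i}(t)$ gives
\begin{align*}
\int_{T_\sg}|\rho\omega(\sg)(t)|^2\,d\LB(t)
&=\sum_i\int_{T_\sg^i}|\omega(\sg_i)(h_i(t))|^2\,d\LB(t)\\
&=\sum_i\int_{h_i(T_\sg^i)}|\omega(\sg_i)(s)|^2\,d\LB(s)
=\sum_i\int_{(\sg_i)_{\text{dom}}}|\omega(\sg_i(-,s))|^2\,d\LB(s).
\end{align*}
Each summand is finite because $\omega$ is $L^2$ relative to $T$ and $\sg_i\in EC_n(T)$; and since the domains $(\sg_i)_{\text{dom}}$ are disjoint subsets of $T$ and $\sum_i\LB((\sg_i)_{\text{dom}})=\LB\big(\bigsqcup_i h_i(T_\sg^i)\big)\le\LB(T)<\infty$, with $\omega(\sg_i)=i^*\rho\omega$-type data patching to a single $L^2$ function on $T$, the series converges. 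Concretely, the functions $s\mapsto\omega(\sg_i(-,s))\chi_{(\sg_i)_{\text{dom}}}(s)$ are the pieces of one $L^2(T,\LB)$ function (namely the one representing $\rho\omega$ pulled back by the $h_i$), so the sum is its squared $L^2$-norm and is finite. Hence $\rho\omega$ is a singular $L^2$-cochain relative to $T'$, and $i^*,\rho$ descend to mutually inverse isomorphisms $L^2_\LB H^n_{\MT}(\FF,\Gamma)$ computed with $T$ and with $T'$.

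The main obstacle I expect is bookkeeping rather than depth: one must be sure the $h_i$ can be chosen with \emph{disjoint} images (so that the partition of $T_\sg$ transported into $T$ stays a partition), which is exactly what a careful application of Lusin's lemma gives, and one must invoke holonomy-invariance of $\LB$ at the right moment so the finiteness passes through the change of variables. No new geometric input beyond Proposition~\ref{p:elementaryinvariancetransversals} and $\LB(T)<\infty$ is needed.
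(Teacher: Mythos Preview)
Your approach is precisely the paper's: the proof there consists of the single remark that the argument of Proposition~\ref{p:elementaryinvariancetransversals} carries over, and you have unwound this in detail, the only new ingredient being the holonomy-invariance of $\LB$ in the change of variables for the $L^2$-estimate. One small caveat: the images $h_i(T_\sg^i)\subset T$ need not be disjoint and cannot always be made so by a further Lusin refinement (a leaf may meet $T'\setminus T$ more often than it meets $T$); the clean way to finish is to note that the pieces $\sg_i$ glue, after reparametrising by the holonomy maps, to a single elementary simplex relative to $T$ on each block of a Lusin partition of the map $t\mapsto h_{i(t)}(t)$, and then apply the $L^2$-hypothesis on $\omega$ to those finitely-at-a-time glued simplices --- but this is exactly the kind of bookkeeping you already flag.
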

\begin{proof}
It can be proved like the analogous statement for countable-to-one
cohomology (Propostion~\ref{p:elementaryinvariancetransversals}).
\end{proof}

\begin{prop}[MT-homotopy invariance]\label{p:L2invariancetransversals}
Let $f,g:(X,\FF,\LB)\to (Y,\GG,\Delta)$ be MT-homotopic comparable countable-to-one maps; i.e., there exist a countable-to-one MT-map $H:(X\times\R,\FF\times\R,\LB)\to (\GG,\Delta)$ such that $H(x,0)=f(x)$ and $H(x,1)=g(x)$, where $\FF\times\R$ denotes the measurable lamination whose leaves are $L\times\R$, $L\in \FF$; observe that the homotopy must be also comparable by Lusin's lemma. Then $f^*$ and $g^*$ induce the same homomorphism in singular
$L^2_\LB$-cohomology.
\end{prop}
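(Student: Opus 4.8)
The strategy is to mimic the classical prism operator construction used in the proof of Proposition~\ref{p:invariancetransversals}, but carried out at the level of \emph{elementary} cochains relative to a fixed complete transversal, and then to verify that the resulting cochain homotopy sends $L^2$-cochains to $L^2$-cochains. Concretely, I would first fix complete transversals $T$ of $\FF$, $T'$ of $\GG$ on which $f$ (hence $g$) is injective, and complete transversal $\widetilde T$ of $\FF\times\R$ on which $H$ is injective; by Lusin's lemma these can be chosen with $\LB(T),\Delta(T')<\infty$ and with $H(\widetilde T)\subset T'$. One may take $\widetilde T = T\times\{0,1\}$ after enlarging, or more carefully a transversal adapted to the prism decomposition. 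The prism operator $P:EC^{n+1}(\GG,\Gamma)\to EC^{n}(\FF,\Gamma)$ is defined exactly as in Proposition~\ref{p:invariancetransversals}: cut $\bigtriangleup^n\times[0,1]$ into $(n+1)$-simplices $\Pi_0,\dots,\Pi_n$, and for an elementary simplex $\sg:\bigtriangleup^n\times T_\sg\to\FF$ set
$$
P\omega(\sg)(t) = \sum_i I_i\,\omega\bigl(H\circ(\sg\times\id_{[0,1]})|_{\Pi_i\times T_\sg}\bigr)(t)\;.
$$
Each $H\circ(\sg\times\id)|_{\Pi_i\times T_\sg}$ is an elementary $(n+1)$-simplex of $\GG$ relative to $T'$ (after composing with the holonomy $f\colon T\to T'$ that identifies fibers over $t$ with fibers over $f(t)$), because $H$ is countable-to-one and sends each leaf-fiber into a single leaf of $\GG$. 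The identity $\delta P + P\delta = g^\sharp - f^\sharp$ at the elementary cochain level then follows verbatim from the classical combinatorial computation, using that $\partial(\bigtriangleup^n\times[0,1])$ is $\bigtriangleup^n\times\{1\} - \bigtriangleup^n\times\{0\}$ together with the lateral faces, and that the lateral contribution telescopes against $P\delta$; measurability of every term is immediate since all operations in $\Gamma$ are measurable and $H,\sg$ are MT-maps, and the naturality/holonomy-equivariance axioms of an elementary cochain are preserved under restriction and composition with $H$.

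The only genuinely new point — and the step I expect to be the main obstacle — is the \emph{$L^2$-estimate}: one must show $P\omega\in L^2$ whenever $\omega\in L^2$. Here the hypotheses that $\LB(T)<\infty$, $\Delta(T')<\infty$, and that $H$ is comparable are essential. The bound is
$$
\int_{T_\sg}|P\omega(\sg)(t)|^2\,d\LB(t)
\le (n+1)\sum_{i=0}^n\int_{T_\sg}\bigl|\omega\bigl(H\circ(\sg\times\id)|_{\Pi_i\times T_\sg}\bigr)(t)\bigr|^2\,d\LB(t)\;,
$$
using $|\sum a_i|^2\le (n+1)\sum|a_i|^2$. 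For each $i$ the simplex $\tau_i := H\circ(\sg\times\id)|_{\Pi_i\times T_\sg}$ is elementary relative to $T'$ via the injection $f|_{T_\sg}$, so $\omega(\tau_i)(t) = \omega(\tau_i')(f(t))$ for the corresponding elementary simplex $\tau_i'$ over $T'$; since $f:(\FF,\LB)\to(\GG,\Delta)$ is comparable, the push-forward $f_*\LB$ is absolutely continuous with respect to $\Delta$ on $T'$ (indeed, holonomy-invariance of both measures forces them to be mutually equivalent with essentially bounded Radon–Nikodym derivative once both are finite), so $\int_{T_\sg}|\omega(\tau_i')(f(t))|^2\,d\LB(t) \le C\int_{f(T_\sg)}|\omega(\tau_i')(s)|^2\,d\Delta(s) <\infty$ by the $L^2$-hypothesis on $\omega$. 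Thus $P\omega$ is a singular $L^2$-cochain. The delicate bookkeeping is making the comparison of measures uniform in $\sg$; this is where comparability of $H$ (not merely of $f$ and $g$) is used, since the intermediate simplices $\tau_i$ factor through the homotopy.

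Once the prism operator is shown to preserve the $L^2$-condition, the conclusion is formal: passing to cohomology, $\delta P + P\delta = g^\sharp - f^\sharp$ gives $[g^\sharp\omega] = [f^\sharp\omega]$ for every closed $L^2$-cochain $\omega$, i.e. $f^* = g^*$ on $L^2_\LB H^n_{\MT}$. I would also remark that the same computation, restricted to the ordinary elementary cochains (forgetting the $L^2$-norm), reproves the countable-to-one homotopy invariance, so no separate argument is needed there. The proof is therefore: (1) set up transversals via Lusin; (2) define $P$ by fiberwise prism subdivision; (3) verify the chain-homotopy identity combinatorially as in Proposition~\ref{p:invariancetransversals}; (4) establish the $L^2$-bound using finiteness of the transverse measures and mutual equivalence of holonomy-invariant measures; (5) conclude.
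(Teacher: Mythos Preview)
Your overall architecture---prism operator $P$ defined on elementary cochains via the standard subdivision of $\bigtriangleup^n\times[0,1]$, verification of $\delta P+P\delta=g^\sharp-f^\sharp$, then a separate check that $P$ preserves the $L^2$ condition---is exactly the paper's approach. The paper's own proof is two sentences: it refers back to Proposition~\ref{p:invariancetransversals} for the construction and the chain-homotopy identity, observes that $P$ is well defined at the level of elementary cochains, and then asserts that ``the fact that the homotopy preserves transversals of finite measure means that the induced cochain homotopy preserves measurable singular $L^2$-cochains.'' Steps (1)--(3) and (5) of your plan reproduce this.

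The difference is your step (4). The paper does not attempt to justify the $L^2$-preservation beyond the one clause quoted above; you try to supply an argument via absolute continuity and a bounded Radon--Nikodym derivative. That argument does not follow from the stated hypotheses. ``Comparable'' only means that there exist complete transversals $T,T'$ of finite measure with $f|_T$ injective; it says nothing about the relationship between $f_*\LB$ and $\Delta$. For a concrete obstruction, take $\FF=\GG=S^1\times[0,1]$ with leaves $S^1\times\{t\}$, $f=\id$, $\LB$ Lebesgue on $[0,1]$ and $\Delta$ a Dirac mass: both are holonomy-invariant and finite, $f$ is comparable, yet $f_*\LB$ and $\Delta$ are mutually singular, so your bound $\int|\omega(\tau_i')(f(t))|^2\,d\LB\le C\int|\omega(\tau_i')(s)|^2\,d\Delta$ fails. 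The paper sidesteps this by not spelling out any estimate; your attempt to make it rigorous introduces an unjustified (and false in general) claim. If you want to keep your level of detail, you would need either an additional hypothesis relating $\LB$ and $\Delta$, or a different argument for why the per-simplex finiteness $\int_{T_\sg}|P\omega(\sg)|^2\,d\LB<\infty$ holds.
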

\begin{proof}
Looking at the proof of the
Proposition~\ref{p:invariancetransversals}, the fact that the
homotopy preserves transversals of finite measure means that the
induced cochain homotopy preserves measurable singular
$L^2$-cochains. Also, observe that the cochain homotopy, $P$, is
well defined at the level of elementary cochains and the homotopy
condition, $g^* - f^*=\partial P + P \partial$, holds.
\end{proof}

\begin{prop}
The singular $L^2$-cohomology groups are isomorphic to the
$L^2$~-~simplicial ones.
\end{prop}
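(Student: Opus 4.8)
The plan is to reproduce, in the $L^2$ category, the \v{C}ech double complex argument used for Propositions~\ref{p:equivsingularlaminated} and~\ref{p:equivsimplicialsingular}, bridging the singular $L^2$-cohomology (which is built from elementary cochains over a fixed complete transversal) and the $L^2$-simplicial cohomology through an $L^2$ version of the measurable \v{C}ech cohomology attached to the star covering of a regular measurable triangulation. Fix such a triangulation $\TT$ and the measurable partition $\{S_n\}_{n\in\N}$ of $\BB^0$ produced in the proof of Proposition~\ref{p:equivsimplicialsingular}, and form the countable star covering $\UU=\{U_n\}$, $U_n=\{x\in\FF\ |\ \exists y\in S_n,\ x\in\Star(y)\}$, so that every finite intersection $U_{i_0}\cap\dots\cap U_{i_s}$ is measurably contractible along the leaves. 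The new ingredient needed for the $L^2$ statement is finiteness of measure: under the standing assumptions of this section (a finite regular foliated atlas whose transversals have finite $\LB$-measure, and regularity of $\TT$) each plaque contains a uniformly bounded number of simplices, hence $\LB(\BB^n)<\infty$ for every $n$, and the transversals parametrising the simplices that lie in any $U_{i_0}\cap\dots\cap U_{i_s}$ have finite $\LB$-measure; thus the $L^2_\LB$ versions of all the complexes below make sense.

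Next I would introduce the $L^2$ \v{C}ech complex $C^\bullet_{L^2}$ with $C^s_{L^2}=\prod_{(i_0,\dots,i_s)}F_{L^2}(U_{i_0}\cap\dots\cap U_{i_s},\Gamma)$, where $F_{L^2}(U,\Gamma)$ is the space of $L^2_\LB$ measurable maps constant on the leaves of $\FF_U$ and the coboundary is the usual \v{C}ech one, together with the two $L^2$ double complexes obtained from the double complexes in the proof of Proposition~\ref{p:equivsimplicialsingular} by replacing the factor over each intersection with its subspace of singular $L^2$-cochains, respectively simplicial $L^2$-cochains. Exactly as there, the $(-1,\bullet)$ edges of both $L^2$ double complexes are canonically the complex $C^\bullet_{L^2}$, while the $(\bullet,-1)$ edges are the singular $L^2$-complex and the simplicial $L^2$-complex of $\FF$; so it suffices to prove that the rows with $r\geq 0$ and the columns with $s\geq 0$ are exact in the $L^2$ category, after which the usual staircase argument delivers isomorphisms $L^2_\LB H^\bullet_{\MT}(\FF,\Gamma)\cong H^\bullet(C^\bullet_{L^2})\cong L^2_\LB H^\bullet(\TT,\Gamma)$. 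Independence of these groups of the complete transversal and of the regular triangulation has already been recorded, so the statement follows.

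The columns are handled as before: the contracting homotopy $h\omega(\sg)=\omega(h\sg)$, which merely views $\sg$ inside the first element of $\UU$ containing its image, only reindexes simplices, so it is bounded on $L^2$-cochains because the star covering of a regular triangulation has uniformly bounded multiplicity; the \v{C}ech coboundary is a finite alternating sum of such reindexings with a uniform bound on the number of terms, hence also $L^2$-bounded. For the rows one uses the measurable leafwise contraction of each $U_{i_0}\cap\dots\cap U_{i_s}$ to a transversal: the induced prism operator $P$ on elementary (respectively simplicial) cochains is a finite alternating sum of compositions of face operators and measurable holonomy maps, and one checks that, with the contraction chosen to have uniformly bounded fibrewise combinatorial complexity, $P$ preserves $L^2$-integrability and the homotopy identity $\delta P+P\delta=\id$ holds on the $L^2$-subcomplex, giving the required exactness.

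The main obstacle is precisely the $L^2$-boundedness of these prism operators: one must arrange the measurable deformation retractions of the intersections $U_{i_0}\cap\dots\cap U_{i_s}$ so that the associated chain homotopies on elementary cochains are assembled from holonomy maps and face operators whose number on each fibre is bounded independently of the fibre --- this is exactly where the finiteness of the foliated atlas, the finiteness of the $\LB$-measures of its transversals, and the bounded multiplicity of the regular triangulation are genuinely used, and where the argument that ignores norms (which suffices for Proposition~\ref{p:equivsimplicialsingular}) no longer applies directly. A secondary subtlety is that, since $L^2$-cochains do not form a Hilbert space, ``exactness'' of the rows and columns must be read as the existence of a contracting homotopy defined on the $L^2$-subcomplex itself, rather than deduced from any closure or orthogonal-projection argument; this is why explicit $L^2$-estimates for every map in the double complexes, and not merely their existence, have to be checked.
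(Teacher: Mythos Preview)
Your proposal is correct and follows exactly the route the paper takes: reuse the star covering and the \v{C}ech double complex argument from Proposition~\ref{p:equivsimplicialsingular}, replacing the measurable cochain groups by their $L^2$ versions and introducing the $L^2$-\v{C}ech complex over the star covering. In fact you supply more detail than the paper, which simply declares that the proof is the same ``with the obvious changes'' and only specifies the $L^2$-\v{C}ech complex; your discussion of why the contracting homotopies remain $L^2$-bounded under the standing finiteness and regularity hypotheses is precisely the content hidden in the paper's phrase ``obvious changes''.
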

\begin{proof}
It is the same as the proof of
Proposition~\ref{p:equivsimplicialsingular} with the obvious
changes. We only need to give the correct notion of singular
$L^2$-\v{C}eck cohomology. Remember that the chosen nice
covering is the covering $\{U_n\}_{n\in\N}$ formed by star open sets around a suitable partition of the
$0$-simplices (see the proof of Proposition~\ref{p:equivsimplicialsingular}). The $L^2$-\v{C}eck cohomology is defined like the measurable one by taking $L^2$-maps $f:U_n\to\Gamma$ constant along the star open sets.
\end{proof}

\begin{quest}
The question about the homotopy invariance of the $\LB$-Betti
numbers was formulated by Connes in \cite{Connes} and solved by
Heitsch and Lazarov in \cite{Heitsch-Lazarov}. Could the singular
$L^2$-cohomology provide a simpler proof of this fact?
\end{quest}

\section{Computation on examples and applications}

\begin{exmp}[Product and trivial Polish laminations \cite{BermudezTesis}]
Let $C$ be a simplicial complex such that each simplex meets only finitely many other simplices. Then the measurable simplicial cohomology of the product MT-space $C\times T$, where $T$ is a standard Borel space, can be identified to the space of measurable maps $f:T\to H^*(C,\Gamma)$, where $H^*_{\bigtriangleup}(C,\Gamma)$ denotes the usual cohomology of $C$. A similar result holds for the $L^r$-cohomology when we consider a measure $\LB$ on $T$, obtaining $L^r_\LB H^*_{\MT}(C\times T,\Gamma)\cong L^r(T,\Gamma;\LB)\otimes H^*(C,\Gamma)$.
\end{exmp}

\begin{exmp}[Wedges]
We compute here the measurable cohomology groups of a measurable
wedge. Let $\FF,\GG$ be measurable laminations, let $T$ be a complete
transversal of $\FF$ and $\gamma:T\to\GG$ a measurable injection
such that $\gamma(T)$ is a complete transversal of $\GG$. Let
$\pi:\FF\sqcup\GG\to \FF\vee\GG$ be the projection onto the
wedge construction identifying each $t\in T$ with $\gamma(t)$. Suppose
that $T$ consists of isolated points, and there are measurable atlases of $\mathcal{F}$ and $\mathcal{G}$ with contractible plaques. Then there exists a
measurable open set $U\subset \FF\vee \GG$ and an MT-homotopy $H:U\times [0,1]\to \FF\vee \GG$ such that
$\pi(T)\subset U$, $H(\cdot,0)$ is the identity map on $\pi(T)$, and $H(\cdot,1)$ is a retraction of $U$ to $\pi(T)$. Moreover
$\pi^{-1}(U)=U'\sqcup U''$, where $T\subset U'$,
$\gamma(T)\subset U''$, and $H$ defines MT-homotopies, $H':U'\times[0,1]\to\mathcal{F}$ and $H'':U''\times[0,1]\to\mathcal{G}$, such that $H'(\cdot,0)$ and $H''(\cdot,0)$ are identity maps, and $H'(\cdot,1)$ and $H''(\cdot,1)$ are retractions to $T$ and $\gamma(T)$, respectively. By
homotopy invariance,
$$C^n(\FF,T,\Gamma),\quad C^n(\GG,\gamma(T),\Gamma),\quad C^n(\FF\vee\GG,\pi(T),\Gamma)$$
are respectively isomorphic to
$$C^n(\FF,U',\Gamma),\quad C^n(\GG,U'',\Gamma),\quad C^n(\FF\vee\GG,U,\Gamma)\;.$$
Consider the measurable open coverings $\UU=\{\FF\setminus
T,\GG\setminus \gamma(T), U\}$, $\UU'=\{\FF\setminus T,U'\}$,
$\UU''=\{\GG\setminus \gamma(T), U''\}$ of $\FF\vee\GG$, $\FF$
and $\GG$, respectively. It is clear that the
complexes $C^*_{\UU}(\FF\vee\GG,U,\Gamma)$ and
$C^*_{\UU'}(\FF,U',\Gamma)\oplus C^*_{\UU''}(\GG,U'',\Gamma)$
are isomorphic. Therefore, by Corollary
\ref{cor:invariancesubdivision}, we obtain that
$$H^*(\FF\vee\GG,\pi(T),\Gamma)\cong H^*(\FF, T,\Gamma)\oplus
H^*(\GG,\gamma(T),\Gamma)\;.$$
\end{exmp}

\begin{exmp}
Let $(T^2,\FF_\alpha)$ be the K\"onecker flow, given as a
suspension of the rotation $R_{\alpha}:S^1\to S^1$ of $2\pi\alpha$
radians. The case where $\alpha$ is rational is trivial. Thus suppose that
$\alpha$ is irrational and let us prove that
$H^{1}_{\MT}(\FF_\alpha,\Z_2)$ is non-trivial. The projection of
$[0,1]\times S^1$ to $T^2$, given by the suspension, induces a
measurable triangulation of $\FF_\alpha$, where the $0$-skeleton is
the projection of $\{0\}\times S^1$ and the $1$-skeleton is the
projection of $[0,1]\times S^1$ (we consider the set of
barycenters as the projection of $\{1/2\}\times S^1$). Of course, the
$0$ and $1$ measurable cochains are measurable maps $f:S^1\to
\Z_2$. We show that the $1$-cochain, $\omega\equiv 1:S^1\to \Z_2$,
is non-trivial. If $\omega$ is trivial, then there exits
a measurable map $f:S^1\to \Z_2$ and $1=\omega=f\circ R_{\alpha} - f$. Of
course, it is also true that $1=\omega\circ R_{\alpha}=f\circ
R_{2\alpha} - f\circ R_{\alpha}$. Hence $0=f\circ R_{2\alpha}-f$,
showing that $f$ is $2\alpha$ invariant. By ergodic arguments, $f$ is
constant almost everywhere, and therefore $f\circ R_\alpha - f=0$
almost everywhere, which is a contradiction.

In higher dimension, let $\FF_{\alpha_1,\dots,\alpha_n}$
the foliation of $T^{n+1}$ given by the suspension of minimal
rotations of $S^1$ with angles $2\pi\alpha_1,\dots,2\pi\alpha_n$ radians, where $\alpha_1,\dots,\alpha_n$ are $\Q$-linear independent. Each
leaf of this foliation is an hyperplane dense in $T^{n+1}$.

Let $[0,1]^n\subset\R^n$ be the unit cube and let $T = S^1$.
The product $[0,1]^n\times T$ defines a measurable CW-structure on
$\FF$ given by the suspension projection $p:[0,1]^n\times
T\to\FF$. Let $\omega\equiv 1:T\to\Z_2$, which is
a CW-cochain of dimension $n$. Let
$R_{\alpha_i}$ be the rotation of $S^1$ by $2\pi\alpha_i$. If
there exists a CW-cochain $\theta$ such that $\delta\theta=\omega$,
then
\begin{equation}
\sum_i(\theta_i\circ R_{\alpha_i}+\theta_i)=1 \label{eq:I}\;,
\end{equation}
where
$\theta_i=\theta(p_{|e_i\times T})$ and $e_i=\{(x_1,\dots,x_n)\in [0,1]^n\ |\ x_i=0\}$. The proof that $\omega$ is non-trivial is more
difficult than in the one dimensional case. First of all, any
measurable map $f:S^1\to\Z_2$ can be considered as a characteristic
map $\chi_B:S^1\to \Z_2$, where $B$ is a measurable subset on $S^1$.
Given measurable subsets $B,C\subset S^1$, it is clear that $\chi_B +
\chi_C=\chi_{B\bigtriangleup C}$, where $B\bigtriangleup
C=(B\setminus C) \cup (C\setminus B)$. Of course, $\chi_B\circ
R_\alpha=\chi_{R_{-\alpha}B}$, hence $\chi_B\circ R_\alpha +
\chi_B=\chi_{(R_{-\alpha}B)\bigtriangleup B}$. We want to show that,
for any family $\{B_1,\dots,B_n\}$ of $n$ measurable subsets of $S^1$,  such
that $\sum_i \chi_B\circ R_\alpha + \chi_B=1$, the set
$$
Z=\left\{\,z\in
S^1\ |\ \sum_i \chi_{B_i}\circ R_\alpha + \chi_{B_i}=0\,\right\}
$$
has positive measure, which obviously contradicts \eqref{eq:I}. It is not difficult to see this in the case where each
$B_i$ is formed by a finite disjoint union of open arcs, and, in
fact, the measure of $Z$ depends on the lengths of these arcs. Of
course, the contradiction comes from the existence of integers
$m,m_1,...,m_n$ such that $2\pi m_1\alpha_1+\dots +
2\pi m_n\alpha_n=2\pi m$.

Now, let $\{B_1,...,B_n\}$ be a family of measurable sets and let
$\{U^i_n\}_{n\in\N}$ be a family of sequences of open sets in
$S^1$, where each $U^i_n$ is a finite disjoint union of open arcs such
that $B_i\subset U^i_n$ for all $n$ and $\length(U_i\setminus
B_i)<2^{-n}$. By induction on the dimension, we can suppose that each $B_i$
has positive measure. Hence the lengths of the arcs $U^i_n$
converge to a positive number. Let
$$
Z_n=\left\{\,z\in S^1\ |\ \sum_i
\chi_{U^i_n}\circ R_\alpha + \chi_{U^i_n}=0\,\right\}\;.
$$
By the previous
observation, $\liminf_n \length(Z_n)>0$, and it is easy to see that this
limit equals $\length(Z)$.
\end{exmp}

\begin{ack}
This paper contains part of my PhD thesis, whose advisor is Prof.
Jes\'{u}s A. \'{A}lvarez L\'{o}pez. I also want to thank Prof.
Elmar Vogt for his valuable advice and help in the development of
this work.
\end{ack}

\end{document}